\newtheorem{theorem}{Theorem}[section]
\newtheorem{lemma}[theorem]{Lemma}
\newtheorem{proposition}[theorem]{Proposition}
\newtheorem{definition}[theorem]{Definition}
\newtheorem{example}[theorem]{Example}
\newtheorem{remark}[theorem]{Remark}
\theoremstyle{definition}
\def\bR{\mathbb{R}}
\def\bC{\mathbb{C}}
\def\bZ{\mathbb{Z}}
\def\cI{\mathcal{I}}
\def\rd{\bR^d}
\def\zd{\bZ^d}
\def\la{\langle}
\def\ra{\rangle}
\def\lc{\left(}
\def\rc{\right)}
\def\supp{\mathrm{supp}}
\def\*b{*_{\bullet}}
\def\Bd'{B_{\delta'}}
\def\cBd'{\bar{B}_{\delta'}}
\def\ird{\int_{\rd}}
\DeclareMathOperator*{\esssup}{ess\,sup}
\def\loc{\mathrm{loc}}
\def\xpqr{X^{p,q}_r}
\def\xpq{X^{p,q}}
\def\xopt{X^{\infty,2}_r}
\def\Ftw{F_{\tau,\omega}}
\def\Fto{F_{\tau,0}}
\def\Fow{F_{0,\omega}}
\newcommand{\adm}[1]{{\left\vert\kern-0.25ex\left\vert\kern-0.25ex\left\vert #1 
		\right\vert\kern-0.25ex\right\vert\kern-0.25ex\right\vert}}
\renewcommand{\adm}[1]{{\vert\kern-0.25ex\vert\kern-0.25ex\vert #1 
		\vert\kern-0.25ex\vert\kern-0.25ex\vert}} 
\newcommand\numberthis{\addtocounter{equation}{1}\tag{\theequation}}
\newtheorem{asu}{Assumption}
\numberwithin{equation}{section}
\newcommand{\pushright}[1]{\ifmeasuring@#1\else\omit\hfill$\displaystyle#1$\fi\ignorespaces}
\newcommand{\pushleft}[1]{\ifmeasuring@#1\else\omit$\displaystyle#1$\hfill\fi\ignorespaces}
\begin{document}
	
\title[Generalized moduli of continuity under deformations]{Generalized moduli of continuity under irregular or random deformations via multiscale analysis}
\author{Fabio Nicola}
\address{Dipartimento di Scienze Matematiche, Politecnico di Torino, Corso Duca degli Abruzzi 24, 10129 Torino, Italy.}
\email{fabio.nicola@polito.it}
\author{S. Ivan Trapasso}
\address{Dipartimento di Scienze Matematiche, Politecnico di Torino, Corso Duca degli Abruzzi 24, 10129 Torino, Italy.}
\email{salvatore.trapasso@polito.it}
\subjclass[2020]{94A12, 42B35, 42C15, 68T05}
\keywords{Stability, generalized modulus of continuity, deformations, amalgam spaces, multiresolution analysis, convolutional neural networks.}

\begin{abstract}
Motivated by the problem of robustness to deformations of the input for deep convolutional neural networks, we identify signal classes which are inherently stable to irregular deformations induced by distortion fields $\tau\in L^\infty(\rd;\rd)$, to be characterized in terms of a generalized modulus of continuity associated with the deformation operator. 

Resorting to ideas of harmonic and multiscale analysis, we prove that for signals in multiresolution approximation spaces $U_s$ at scale $s$, stability in $L^2$ holds in the regime $\|\tau\|_{L^\infty}/s\ll 1$ --- essentially as an effect of the uncertainty principle. Instability occurs when $\|\tau\|_{L^\infty}/s\gg 1$, and we provide a sharp upper bound for the asymptotic growth rate. The stability results are then extended to signals in the Besov space $B^{d/2}_{2,1}$ tailored to the given multiresolution approximation. We also consider the case of more general time-frequency deformations. 

Finally, we provide stochastic versions of the aforementioned results, namely we study the issue of stability in mean when $\tau(x)$ is modeled as a random field (not bounded, in general) with identically distributed variables $|\tau(x)|$, $x\in\rd$.  
\end{abstract}
\maketitle

\section{Introduction}
\subsection{The problem of stability to deformations}
In this note we consider a mathematical problem motivated by the theory and practice of machine learning, that is the robustness of the output of a neural network under modifications of the input datum. Let us briefly illustrate this issue by considering a function $f\colon \rd \to \bR$. Some basic transformations to be taken into account involve \textit{intensity perturbations}, that is $\tilde{f}(x)=f(x)+h(x)$ for some $h \colon \rd \to \bR$, or \textit{signal deformations}, namely $\tilde{f}(x)= F_\tau f(x) \coloneqq f(x-\tau(x))$ for some distortion field $\tau\colon \rd \to \rd$. We stress that this model encompasses natural transformations such as translations or rotations. 

Regardless of the variety of the architectures, the network under our attention can be represented by a map $\Phi$ from $L^2(\rd)$ to some Banach space with norm $\adm{\cdot}$. In order to better appreciate the relevant phenomena, let us consider the classification setting where $\Phi$ acts as a feature extractor. A fair degree of stability of $\Phi$ to small transformations of the input signal is a naturally desirable property in several contexts. For example, consider the classic learning task of digit recognition from images of handwritten symbols, where the input signals suffer from both intra-class and inter-class variance, due for instance to differences in the position of the digit with respect to the background or handwriting styles. As a rule of thumb, it is expected that a small distortion of $f$ into $\tilde{f}$ should correspond to small norm discrepancy $\adm{\Phi(\tilde{f})-\Phi(f)}$ at the level of features. 

The previous remarks thus lead us to require that $\Phi$ enjoys a Lipschitz regularity condition:
\begin{equation}\label{eq intro fe lip} \adm{\Phi(\tilde{f})-\Phi(f)} \le C\| \tilde{f}-f\|_{L^2}, \quad f,\tilde{f} \in L^2(\rd). 
\end{equation}
The smallest constant $C>0$ for which such an estimate holds will be denoted by ${\rm Lip}(\Phi)$. Moreover, in the particular case of a deformation $\tilde{f}=F_\tau f$ of $f$, it would be desirable for $\adm{\Phi(F_\tau f)-\Phi(f)}$ to be small whenever $\tau$ is small with respect to some distortion metric. We can distinguish at least two different angles on the matter:
\begin{itemize}
    \item In keeping with the spirit of geometric deep learning \cite{bro_gdl}, \textit{structural stability} guarantees are inferred from global and local invariance requirements that are \textit{a priori} embedded in the design of the network. A prominent example in this connection is provided by the analysis of the scattering transform introduced in \cite{mall cpam} (see also \cite{brunamallat}: if $\Phi$ is a scattering transform with fixed wavelets filters, modulus nonlinearity and no pooling stages, it was proved in \cite[Proposition 2.5]{mall cpam} that $\Phi$ is a non-expansive transform (i.e., $\rm{Lip}(\Phi)=1$), and in \cite[Theorem 2.12]{mall cpam} that, for every $\tau\in C^2(\rd;\rd)$ with $\|\nabla \tau\|_{L^\infty}\leq 1/2$, 
\begin{equation}\label{eq mallat0}
\adm{\Phi(F_\tau f)-\Phi(f)}\leq C (2^{-J}\|\tau\|_{L^\infty} +\max\{J,1\}\|\nabla \tau\|_{L^\infty}+\|H \tau\|_{L^\infty})\|f\|_{\mathrm{scatt}},
\end{equation}
where $\|f\|_{\mathrm{scatt}}$ is a mixed $\ell^1(L^2)$ scattering norm (which is finite for functions with a logarithmic Sobolev-type regularity), $H \tau$ denotes the Hessian of $\tau$  and $2^J$ is the coarsest scale in the dyadic multiscale analysis associated with the network filters.

    \item In the case where little information on the architecture of the network is available or exploitable, one can only assume to satisfy a Lipschitz condition as in \eqref{eq intro fe lip}. In such cases, stability results for $\Phi$ can be possibly \textit{inherited} from the inherent robustness to deformations of certain input signal classes. This amount to determine a subset $\mathcal{E}\subset L^2(\rd)$ such that bounds for $\|F_\tau f-f\|_{L^2}$ in terms of some complexity metric of $\tau$ can be proved if $f \in \mathcal{E}$. This is the essence of the \textit{decoupling method} introduced in \cite{wiat paper,wiat old,grohs wiat} to obtain stability results for generalized scattering networks by exploiting \textit{sensitivity estimates} of the form $\|F_\tau f-f\|_{L^2} \le C_{\mathcal{E}}\|\tau\|_{L^{\infty}}^{\alpha_{\mathcal{E}}} \|f\|_{L^2}$, which are proved for several classes of interest (including Lipschitz, band-limited and cartoon functions) and deformations $\tau\in C^1(\rd;\rd)$ with $\|\nabla \tau\|_{L^\infty}$ sufficiently small\footnote{Precisely, $\|\nabla \tau \|_{L^\infty}\leq 1/2d$ in \cite{wiat paper} and $\| \nabla \tau \|_{L^\infty}\leq 1/2$ in \cite{mall cpam}. This discrepancy is due to the definition $\| \nabla \tau \|_{L^\infty}\coloneqq \| |\nabla \tau| \|_{L^\infty}$  where $|\nabla \tau|$ is the Frobenius norm of the matrix $\nabla \tau(x)$  in \cite{mall cpam} and the $\ell^\infty$ norm of its entries in \cite{wiat paper}.}.
\end{itemize}

A detailed comparison between Mallat's scattering transform and generalized scattering networks would lead us too far. For our purposes, we just stress that in both cases the results are proved for regular (i.e., at least $C^1$) deformations. In the case of the scattering transform, one is ultimately confronted with the interplay between the network multiscale architecture and the deformation regularity. Consider the case where $f$ is a band-pass function; roughly speaking, the condition $\|\nabla \tau\|_{L^\infty}\leq 1/2$ guarantees that $F_\tau f$ is still localized in frequency, essentially in the same band of $f$, therefore a stability result as in \eqref{eq mallat0} is reasonable (although highly non-trivial to prove) since the network separates scales by design. 

On the other hand, the scope of the decoupling method goes beyond the analysis of generalized scattering transforms: the weak requirement that $\Phi$ is Lipschitz stable as in \eqref{eq intro fe lip} allows us to virtually encompass any neural network where detailed information on structural stability is merely not available. Actually, while most of real-life neural networks are empirically observed to enjoy Lipschitz stability \cite{scaman}, assuming solely this condition about the feature extractor is a worst-case scenario, since other elusive forms of regularity are heuristically expected to occur as well --- such as regularization and cancellation phenomena across hidden layers. In fact, the mathematical literature in this respect is quite limited (see e.g., \cite{balan,zou}) and the available provable bounds for ${\rm Lip}(\Phi)$ are usually quite pessimistic, as they do not exploit further structural information on the network. 

Let us also highlight that, as observed in \cite{mall cpam}, the condition $\|\nabla \tau\|_{L^\infty}\leq 1/2$ can be relaxed to $\|\nabla \tau\|_{L^\infty}<1$ but then the constant blows up when $\|\nabla \tau\|_{L^\infty}\to 1$. The same remark applies to the constants $C_{\mathcal{E}}$ of sensitivity bounds proved in \cite{wiat paper} for band-limited functions and in \cite{koller} for functions in the Sobolev space\footnote{Actually, the result in \cite{koller} is stated for functions in the Sobolev space $H^2(\rd)$. Inspection of the proof and an easy density argument show that it actually holds for functions in the Sobolev space $H^1(\rd)$ of functions $f\in L^2(\rd)$ such that $\|\nabla f\|_{L^2}<\infty$.} $H^1(\rd)$.  It is thus natural to wonder whether stability results can be derived if $\|\nabla \tau\|_{L^\infty}\geq 1$ (therefore $x\mapsto x-\tau(x)$ is no longer invertible) or even for less regular deformations, such as discontinuous ones. Broadly speaking, irregular perturbations such as local pixel shuffling of an image proved to be involved in sophisticate adversarial models such as pixel deflection \cite{pixdef}. They could also be used to model local distortion errors arising in signal encoding, where robustness of classification is naturally expected, as well as to compare contiguous frames of a video where pixels locally move in an irregular fashion (i.e., discontinuous optical flows, pose estimation). 

\subsection{Robustness to irregular deformations}

The previous discussion suggests that the interplay between the deformation regularity and the network structure is a subtle issue. In fact, it turns out that, unless a network is purposefully designed to be stable to irregular deformations, stability results for $\Phi$ at this low-regularity level can only be obtained via the decoupling methods, hence passing on the robustness issue to the input signal class. Indeed, in the context of irregular deformations,  even for well structured networks such as the wavelet scattering ones, it may happen that $\adm{\Phi(F_\tau f)-\Phi(f)}\approx \|F_\tau f-f\|_{L^2}$. 

To be more precise, let us illustrate two kinds of peculiar phenomena that could occur when dealing with irregular deformations --- see also \cite{NT_stab} for further details.  

\begin{enumerate}[(a)]
\item Consider a band-pass function $f$ oscillating at frequency $1/s$ ($s>0$ being the scale); even if $\|\tau\|_{L^\infty}$ is small, it may very well happen that the energy of $F_\tau f$ is amplified by a factor $(\|\tau\|_{L^\infty}/s)^{d/2}$; see Figure \ref{figura intro}. Hence, if $\Phi$ is any energy preserving map ($\|f\|_{L^2}\lesssim \adm{\Phi(f)}\lesssim\|f\|_{L^2}$) then it follows from the triangle inequality that $\adm{\Phi(F_\tau f)-\Phi(f)}/\|f\|_{L^2}\gtrsim (\|\tau\|_{L^\infty}/s)^{d/2}$ when $\|\tau\|_{L^\infty}$ is large \textit{compared to}  $s$. 

\item Let $f$ be a band-pass function, as above, oscillating at frequency $1/s$; even if $\|\tau\|_{L^\infty}$ is small, when $\|\tau\|_{L^\infty}$ is comparable to $s$ it may happen that $f$ and $F_\tau f$ are localized in different dyadic frequency bands, see Figure \ref{figura intro due}. In particular, if $\Phi$ is a wavelet scattering network, their energy will propagate along separate frequency paths and thus the error $\adm{\Phi(F_\tau f)-\Phi(f)}^2 \approx \adm{\Phi(F_\tau f)}^2+\adm{\Phi(f)}^2$ will not be small if $\Phi$ is energy preserving. \end{enumerate}

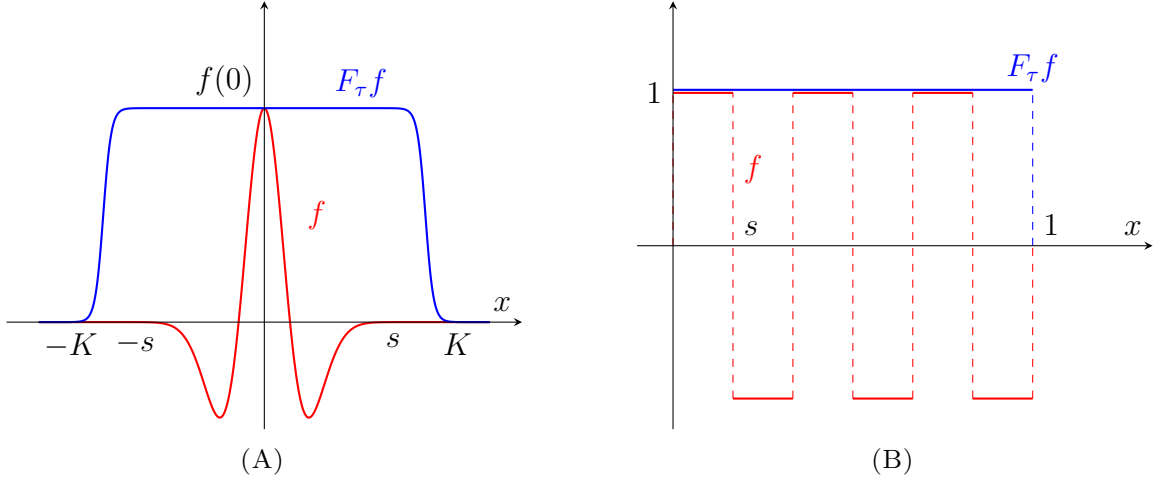
\begin{figure}[h!]
\centering
\captionsetup{width=0.95\linewidth}
     \begin{subfigure}[b]{0.45\linewidth}
     \begin{tikzpicture} \begin{axis}[samples=499,xmin=-4, xmax=4,
      ytick=\empty, 
      xtick=\empty,
    ymin=-0.5, ymax=1.5,
    axis lines=center,
    axis on top=true,
    domain=-3:3,
    xlabel=$x$,]
\addplot [red, thick, domain=-3.5:3.5] plot (\x, {(1-(2*pi*(\x)*(\x)))* e^(-pi*(\x)*(\x))});
\addplot [blue,thick,domain=-3.5:0]    plot (\x, {0.5*(1+tanh(8*(\x+2.5)))});
\addplot [blue,thick,domain=0:3.5]    plot (\x, {0.5*(1+tanh(-8*(\x-2.5)))});
\draw (axis cs:3,0) node[below]{$K$} ;
\draw (axis cs:-3,0) node[below]{$-K$} ;
\draw (axis cs:2,0) node[below]{$s$} ;
\draw (axis cs:-2,0) node[below]{$-s$} ;

\draw (axis cs:0,1) node[above left]{$f(0)$} ;
\draw (axis cs:0.5,0.5) node[right,red]{$f$} ;
\draw (axis cs:1.5,1) node[above,blue]{$F_\tau f$} ;
\end{axis}
\end{tikzpicture}
\caption{}
\label{figura intro}
\end{subfigure}
     \hfill
     \begin{subfigure}[b]{0.45\linewidth}
     \begin{tikzpicture} \begin{axis}[samples=499,xmin=-0.3, xmax=4,
      ytick=\empty, 
      xtick=\empty,
    ymin=-0.6, ymax=0.8,
    axis lines=center,
    axis on top=true,
    domain=-3:3,
    xlabel=$x$,]
\addplot [jump mark left,no marks,blue,thick] coordinates {(0,0.51) (3,0.51)};
\addplot [jump mark left,no marks,red,thick] coordinates {(0,0.5) (0.5,-0.5) (1,0.5) (1.5,-0.5) (2,0.5) (2.5,-0.5) (3,0.5)};
\draw [dashed, blue] (axis cs:3,0) -- (axis cs:3,0.51);

\draw [dashed, red] (axis cs:0,0) -- (axis cs:0,0.5);
\draw [dashed, red] (axis cs:0.5,-0.5) -- (axis cs:0.5,0.5);
\draw [dashed, red] (axis cs:1,-0.5) -- (axis cs:1,0.5);
\draw [dashed, red] (axis cs:1.5,-0.5) -- (axis cs:1.5,0.5);
\draw [dashed, red] (axis cs:2,-0.5) -- (axis cs:2,0.5);
\draw [dashed, red] (axis cs:2.5,-0.5) -- (axis cs:2.5,0.5);
\draw [dashed, red] (axis cs:3,-0.5) -- (axis cs:3,0);

\draw (axis cs:3,0) node[above right]{$1$} ;

\draw (axis cs:0,0.5) node[ left]{$1$} ;

\draw (axis cs:0.5,0.25) node[right,red]{$f$} ;
\draw (axis cs:3,0.5) node[above,blue]{$F_\tau f$} ;
\draw (axis cs:0.5,0) node[above right]{$s$} ;
\end{axis}
\end{tikzpicture}
\caption{}
\label{figura intro due}
\end{subfigure}
\caption{(A) A signal $f$ supported on $[-s,s]$ and its deformation $F_\tau f$, where $\tau(x)=x$ for $|x|<K$, with $K>s$. The plateau level corresponds to the value $f(0)$. The operator $F_\tau$ (with the choice of $\tau$ specified above) performs a single-point sampling of $f$, hence it does not make sense on discontinuous signals.
\\[1ex] (B) A signal $f$ localized in frequency where $|\omega|\approx s^{-1}$. With the choice of the deformation $\tau=s\mathbbm{1}_{\{f=-1\}}$, the signal $F_\tau f$ is low-pass (a similar example with $f$ continuous is easily obtained by smoothing the steps).
}
\end{figure}

These phenomena are evident sources of instability in the case where $\|\tau\|_{L^\infty}/s\gg 1$ and $\|\tau\|_{L^\infty}/s\approx 1$ respectively. In passing, note that in order for $F_\tau f$ to be well defined as an element of $L^2(\rd)$ for every $\tau\in L^\infty(\rd;\rd)$, independently of the representative of $f$ in $L^2(\rd)$, $f$ must be assumed continuous at least --- see again Figure \ref{figura intro} for a concrete reference.

We thus conclude that for irregular deformations one is forced to shift the robustness problem from the network architecture to the signal class. In keeping with the spirit of mathematical analysis, let us emphasize that proving bounds for $\|F_\tau f-f\|_{L^2}$ in terms of the deformation size $\|\tau\|_{L^\infty}$ and $\|f\|_{L^2}$ for suitable signal classes can be thought of as a generalization of a typical problem of harmonic analysis, where the differentiability properties of certain function spaces are quantitatively measured in terms of the magnitude of some $L^p$ modulus of continuity $\omega_p[f](t) \coloneqq \| f(x+t)-f(x) \|_{L^p_x}$, $t \in \rd$, as $|t| \to 0$ --- cf.\ for instance \cite[Chapter V]{stein} for a classic reference on the topic. This approach allows one to fine tune the regularity scale of a signal in a very precise way.

The previous remarks motivate focusing on a family of spaces where a precise tuning of the scale is available, in order to elucidate the relationship with the deformation size. We resort again to ideas and tools of modern harmonic analysis, namely we consider multiresolution approximation spaces $U_s \subset L^2(\rd)$, $s>0$ \cite{mallbook}, with a Riesz basis given by a sequence of functions of the type $\phi_{s,n}(x)\coloneqq s^{-d/2}\phi((x-ns)/s)$, $n \in \zd$, where $\phi$ is a fixed filter satisfying certain mild regularity and decay conditions (cf.\ Assumptions \ref{asu riesz}, \ref{asu rev 1} and \ref{asu rev 2} in Section \ref{sec mult} below). Different choices of $\phi$ result in diverse multiresolution approximations, including band-limited functions and polynomial splines of order $n\geq 1$ --- see the discussion in Example \ref{rem examples} below for more details. In general, the introduction of a fixed resolution scale is also natural as a mathematical model of a concrete signal capture system --- cf.\ the general A/D and D/A conversion schemes in \cite[Section 3.1.3]{mallbook}, and also \cite{bietti0,bietti} for a similar limited-resolution assumption in a discrete setting. The scale $s$ (or rather $s^{-1}$) can also be viewed as a rough measure of the complexity of the input signal, and the previous discussion suggests that the ratio $\|\tau\|_{L^\infty}/s$ should appear in sensitivity bounds rather than just $\|\tau\|_{L^\infty}$, which is also expected in order to have dimensionally consistent estimates.

\subsection{Generalized moduli of continuity for multiresolution spaces} The core of our first result can be presented as follows. Under suitable assumptions on $\phi$ there exists a constant $C>0$ such that, for every $\tau \in L^\infty(\rd;\rd)$, $s>0$, \begin{equation}\label{eq intro main}
\| F_\tau f - f \|_{L^2} \le \begin{cases}
		C(\|\tau\|_{L^\infty}/s) \| f \|_{L^2} & (\|\tau\|_{L^\infty}/s \le 1) \\
		C(\|\tau\|_{L^\infty}/s)^{d/2} \| f \|_{L^2} & (\|\tau\|_{L^\infty}/s \ge 1) 
	\end{cases}, \quad f \in U_s. \end{equation}
Stability guarantees for any Lipschitz network $\Phi$ can thus be inferred by the fact that $\adm{\Phi(F_\tau f) - \Phi(f)} \le {\rm Lip}(\Phi) \|F_\tau f- f\|_{L^2}$. We refer to Theorem \ref{thm def bound Us} for precise statements. The estimate \eqref{eq intro main} for $\|\tau\|_{L^\infty}/s \le 1$ recovers and extends the results proved in \cite{wiat paper} for band-limited functions, now without any regularity assumption on the deformation. In Section \ref{sec sharp pwr} we show the sharpness of the estimate \eqref{eq intro main} in both regimes $\|\tau\|_{L^\infty}/s \gg 1$ and $\|\tau\|_{L^\infty}/s \ll 1$.

In short, whenever we have a Lipschitz bound, we have a stability result in the regime $\|\tau\|_{L^\infty}/s \ll 1$, which can be explained in heuristic terms as one of the manifold forms of the uncertainty principle --- see below for further comments in this connection. Observe also that the rate of instability agrees with that of the previous discussion in \textit{(a)} when small-size oscillations, compared with the size of the deformation (namely, if $\|\tau\|_{L^\infty}/s \gg 1$), are allowed.

Interestingly, for fixed $f$, we have in any case $\adm{\Phi(F_\tau f) - \Phi(f)}=O(\|\tau\|_{L^\infty})$ as $\|\tau\|_{L^\infty}\to 0$, although this asymptotic estimate is not uniform with respect to $s$. In fact, in sharp contrast with \eqref{eq mallat0}, the factor $1/s$ in front of $\|\tau\|_{L^\infty}$ associates with a feature of the input signal (i.e., the resolution of $f$), whereas the invariance resolution $2^{-J}$ in \eqref{eq mallat0} is a fixed quantity that depends on the architecture of the network. However, the example in Figure \ref{figura intro due} above shows that in the framework of irregular deformations, even for a fixed wavelet scattering network, we cannot hope for an estimate whose quality does not deteriorate when $\|\tau\|_{L^\infty}$ becomes comparable to the size of the oscillations of $f$. We thus infer that while the choice of wavelet filters is crucial in \cite{mall cpam} to manufacture a transform that is Lipschitz stable to the action of small diffeomorphisms, robustness under \textit{small and irregular} deformations obeys a more general rule, as already anticipated above. In this connection, we address the reader to the aforementioned paper \cite{NT_stab}, where instability results are proved for wavelet scattering networks and deformations at low regularity levels, namely for distortion fields $\tau \in C^\alpha(\rd;\rd)$ with $0\le \alpha <1$.  

The assumption that the input signal $f$ belongs to $U_s$ could be judged not realistic in practice. Rather, we often deal with signals that can be well approximated in low-complexity spaces. For such signal classes we have again a stability result, which is briefly outlined here in low-dimensional settings for simplicity --- we refer to Theorem \ref{main teo 0} for a general and precise statement. Let
$V_j\coloneqq U_{2^j}$, $j\in\mathbb{Z}$, be a multiresolution analysis of $L^2(\rd)$. There exists a constant $C>0$ such that, for every $\tau\in L^\infty(\rd;\rd)$,
\[
\| F_\tau f - f\|_{L^2} \leq C \|\tau\|_{L^\infty}^{d/2}\|f\|_{\dot{B}^{d/2}_{2,1}}, \qquad d=1,2 ,
\]
for any $f\in L^2(\rd)$ such that $\|f\|_{\dot{B}^{d/2}_{2,1}}<\infty$, where $\dot{B}^{d/2}_{2,1}$ denotes the homogeneous Besov space tailored to the given multiresolution analysis \cite[Section 9.2.3]{mallbook}. This regularity level looks optimal in general --- as already observed, $f$ should be at least continuous, and therefore in $B^{d/2}_{2,1}(\rd)$ if we consider the scale of $L^2$-based Besov spaces as a reference.

In Section \ref{sec freq mod def} we prove estimates in the same spirit for more general time-frequency deformations of the type $\Ftw f(x) = e^{i\omega(x)}f(x-\tau(x))$. Modulation deformations are relevant in case of spectral distortions of input signals. These deformations are approached here in a ``perturbative'' way --- that is, by reducing to the results already proved for the case $\omega \equiv 0$. 

The main technical tools behind our results are the properties of certain spaces $X^{p,q}_r$, tailored to the deformation scale $r>0$. Such function spaces are usually referred to as Wiener amalgam spaces and were introduced by Feichtinger in the '80s \cite{fei83,fei81}. As the name suggests, they are obtained by means of a norm that amalgamates a local summability of $L^p$ type on balls of radius $r$ with an $L^q$ behaviour at infinity. They are of current use in harmonic analysis and PDEs, possibly under slightly different names and forms --- see for instance \cite{dan,tao}.  

In Section \ref{sec xpqr} we collect the main properties of these spaces, while in Section \ref{sec xopt} we focus on the space $\xopt$ of locally bounded functions, uniformly at the scale $r$, with $L^2$ decay. This choice should not be intended as a mere technical workaround: in Proposition \ref{maint xopt} we prove that this class is indeed the optimal choice when dealing with arbitrary bounded deformations, since for functions $f \in \xopt\cap C(\rd)$ we have the clear-cut characterization 
\[ 	\| f \|_{\xopt} = \max \{ \| F_\tau f \|_{L^2} : \tau\in L^\infty(\rd;\rd), \, \|\tau\|_{L^\infty} \le r \}. \]
Moreover, the local control offered by $\xpqr$ can be effectively exploited to prove a crucial embedding, cf.\ Theorem \ref{thm rev hold}, which can be heuristically referred to as a reverse H\"older-type inequality for signals in $U_s$ in the spirit of \cite[Lemma 2.2]{tao}, which can be regarded as a novel form of the already mentioned uncertainty principle. Intuitively, if a function $f$ is localized in a low-frequency ball of radius $R^{-1}$ centered at the origin, then $f$ is approximately constant on balls of radius $R$. As a result, deliberately ignoring the effect of the tails, its $L^\infty$ norm on a ball of radius $r<R$ can be roughly bounded by the $L^2$ norm on the same ball (up to a factor $(R/r)^{d/2}$). Strictly speaking, amalgam spaces are needed to put these heuristic remarks on a rigorous ground, leading precisely to the reverse H\"older-type inequality stated in Theorem \ref{thm rev hold}. 

We adopted so far a deterministic model for the deformation, namely a ball in $L^\infty(\rd;\rd)$, without any additional structure, and therefore we provided stability guarantees in a worst-case scenario. In Section \ref{sec random} we assume instead that $\tau$ is a random field with identically distributed variables $|\tau(x)|$, $x\in\rd$. We accordingly study the issue of stability in mean, providing stochastic versions of the above results. For example, we prove that
\begin{equation} \label{eq stoc intro}
\mathbb{E} \| F_\tau f-f\|_{L^2}^2\leq C \mathbb{E}[|\tau|^d]\|f\|^2_{\dot{B}^{d/2}_{2,1}}, \qquad d=1,2,
\end{equation}
see Theorem \ref{teo random} for the precise statement in any dimension, and for similar results when $f$ belongs to limited-resolution spaces $U_s$ as above. Here we set $\mathbb{E}[|\tau|^d]$ for $\mathbb{E}[|\tau(x)|^d]$, the latter being in fact independent of $x$. We also emphasize that the field $\tau$ is no longer assumed to be bounded.

\section{Notation}\label{sec notation}

{\setlength{\parindent}{0cm} 
\setlength{\parskip}{0.2cm}
The open unit ball of $\rd$ with radius $r>0$ and centered at the origin is denoted by $B_r$. 

We introduce a number of operators acting on $f\colon \rd \to \bC$: 
\begin{itemize}
	\item the dilation $D_\lambda$ by $\lambda \ne 0$: $D_\lambda f(y) = f(\lambda y)$;
	\item the translation $T_x$ by $x \in \rd$: $T_x f(y) = f(y-x)$;
	\item the modulation $M_\xi$ by $\xi \in \rd$: $M_\xi f(y) = e^{i y \cdot \xi} f(y)$;
	\item the reflection: $\cI f(y) = f(-y)$;
	\item the Fourier transform (whenever meaningful, e.g.\ if $f \in L^1(\rd)$), normalized here as
	\[ \widehat{f}(\omega) =\mathcal{F}(f)(\omega)= \ird e^{-i\omega \cdot y} f(y) dy. \] 
\end{itemize}

The space $L^\infty(\rd;\rd)$ contains all the measurable vector fields $\tau\colon \rd \to \rd$ such that 
\[ \| \tau \|_{L^\infty} \coloneqq \esssup_{y\in\rd} |\tau(y)| < \infty. \]

We introduce the inhomogeneous magnitude $\la y \ra$ of $y \in \rd$, that is $\la y \ra \coloneqq (1+|y|^2)^{1/2}$. 

The symbol $\mathbbm{1}_E$ will be used to denote the characteristic function of a set $E$. 

While in the statements of the results we will keep track of absolute constants in the estimates, in the proofs we will heavily make use of the symbol $X \lesssim Y$, meaning that the underlying inequality holds up to a universal positive constant factor, namely 
\[ X \lesssim Y \quad\Longrightarrow\quad\exists\, C>0\,:\,X \le C Y. \] Moreover, $X \asymp Y$ means that $X$ and $Y$ are \textit{equivalent quantities}, that is both $X \lesssim Y$ and $X\lesssim Y$ hold. 

In the rest of the note all the derivatives are to be understood in the distribution sense, unless otherwise noted. }

\section{Multiscale Wiener amalgam spaces}\label{sec xpqr}
The following family of function spaces will play a key role in the following. 
\begin{definition}
	For $1 \le p,q \le \infty$ and $r>0$, we denote by $X^{p,q}_r$ the space of all the complex-valued measurable functions in $\rd$ such that
	\begin{equation}\label{eq X norm}
		\| f \|_{\xpqr} \coloneqq \lc \ird \| T_{-x} f \|_{L^p(B_r)}^q dx \rc^{1/q} < \infty, 
	\end{equation} with obvious modifications if $q=\infty$. In the case where $r=1$ we write $\xpq$ for $X^{p,q}_1$. 
\end{definition}
Let us emphasize that $X^{\infty,1}$ coincides with the well known Wiener space of harmonic analysis (cf.\ e.g.\ \cite[Section 6.1]{grobook}). More generally, $\xpq$ coincides with the Wiener amalgam space $W(L^p,L^q)$ of functions with local regularity of $L^p$ type and global decay of $L^q$ type, first introduced by Feichtinger in the '80s \cite{fei83,fei81}; recall that the latter is a Banach space provided with the norm
\[ \| f \|_{W(L^p,L^q)} = \lc \ird \| T_{-x}f \|_{L^p(Q)}^q dx \rc^{1/p},  \]
where $Q\subset \rd$ is an arbitrary compact set with non-empty interior. In fact, different choices of $Q$ yield equivalent norms; typical choices include $Q=B_1$ and $Q=[0,1]^d$. Moreover, the following equivalent discrete-type norm can be used to measure the amalgamated regularity: 
\begin{equation}\label{eq disc norm}
	\| f \|_{X^{p,q}} \asymp \lc \sum_{k \in \zd} \| T_{-k} f \|_{L^p(Q)}^q \rc^{1/q}, \quad Q=[0,1]^d. 
\end{equation}

We also highlight that $X^{p,p}_r$ coincides with $L^p(\rd)$ as set for any $1 \le p \le \infty$, but the norm is rescaled:
\[ \| f \|_{X^{p,p}_r} = r^{d/p} \|f \|_{L^p}. \]
A similar change-of-scale property holds with respect to $X^{p,q}$, in the sense of the following result. 
\begin{lemma}\label{lem rescal norm}
	For any $1 \le p,q \le \infty$ and $r>0$, we have that $\xpqr = \xpq$ as sets, and
	\[ \| f \|_{\xpqr} = r^{d \lc \frac{1}{p} + \frac{1}{q} \rc} \| D_r f \|_{\xpq}. \]
\end{lemma}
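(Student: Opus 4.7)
The identity is essentially a two-step change of variables, and the proof is a direct unraveling of the definitions. The first thing I would do is rewrite the inner $L^p$-norm in \eqref{eq X norm} so that the integration domain becomes $B_1$ rather than $B_r$. Concretely, for fixed $x\in\rd$, in the integral
\[ \| T_{-x} f \|_{L^p(B_r)}^p = \int_{B_r} |f(y+x)|^p\, dy \]
I substitute $y = rz$. The ball $B_r$ in the $y$-variable becomes $B_1$ in the $z$-variable, and $dy = r^d dz$, while $f(rz+x) = (D_r f)(z + x/r)$. This yields the scaling relation
\[ \| T_{-x} f \|_{L^p(B_r)} = r^{d/p}\,\| T_{-x/r}(D_r f) \|_{L^p(B_1)}. \]

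Next I plug this into the outer integral and perform the substitution $x = r u$, which contributes an additional factor of $r^d$ from $dx = r^d du$. Collecting exponents gives
\[ \| f \|_{\xpqr}^q = r^{dq/p}\!\int_{\rd}\! \| T_{-x/r}(D_r f) \|_{L^p(B_1)}^q dx = r^{dq/p+d}\!\int_{\rd}\! \| T_{-u}(D_r f) \|_{L^p(B_1)}^q du, \]
and extracting the $q$-th root yields the claimed factor $r^{d(1/p+1/q)}$ in front of $\|D_r f\|_{\xpq}$. Set equality of $\xpqr$ and $\xpq$ then follows because the formula shows that $\|f\|_{\xpqr}$ is finite iff $\|D_r f\|_{\xpq}$ is finite, and $D_r$ is a bijection on measurable functions.

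For the endpoint cases $p=\infty$ or $q=\infty$, the same computation goes through with the essential supremum in place of the corresponding integral; one just needs to note that $r^{d/p}$ and $r^{d/q}$ reduce to $r^0=1$ under the convention $1/\infty=0$, which is consistent with the fact that the essential supremum is invariant under the measure-preserving rescalings involved (modulo the appropriate change-of-variable in the outer variable when $q<\infty$). There is no real obstacle in the argument; the only point requiring care is keeping the exponents of $r$ straight across the two substitutions, so I would double-check by testing the degenerate cases $p=q$ (where the formula must recover $\|f\|_{X^{p,p}_r} = r^{d/p}\|f\|_{L^p}$ noted just above the lemma) and $r=1$ (where both sides must coincide).
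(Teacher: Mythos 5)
Your proof is correct and follows essentially the same route as the paper's: a change of variables $y=rz$ in the inner integral producing the factor $r^{d/p}$, rewriting $f(x+rz)$ as $D_rf(r^{-1}x+z)$, and a second substitution $x=ru$ in the outer integral producing $r^{d/q}$. The paper likewise treats only the case $p,q<\infty$ explicitly and notes the endpoint cases follow easily, so nothing is missing.
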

\begin{proof} Let us consider the case $p,q < \infty$ for conciseness, the other cases following easily. A straightforward computation shows that
	\begin{align*}
	\| f \|_{\xpqr} & = \lc \ird \lc \int_{B_r} |f(x+y)|^p dy \rc^{q/p} dx \rc^{1/q} \\  & = r^{d/p} \lc  \ird \lc \int_{B_1} |f(x+rz)|^p dz \rc^{q/p} dx \rc^{1/q}  \\
		& = r^{d/p} \lc \ird \lc \int_{B_1} |D_r f(r^{-1}x+z)|^p dz \rc^{q/p} dx \rc^{1/q} \\
		& = r^{d(1/p+1/q)} \lc \ird \lc \int_{B_1} |D_r f(x+z)|^p dz \rc^{q/p} dx \rc^{1/q},
	\end{align*} that is the claim. 
\end{proof}
For future reference let us examine some properties of the spaces $\xpqr$. First, we prove an embedding result that will be often used below.

\begin{proposition}
	For any $1 \le p_1,p_2,q \le \infty$ with $p_1 \le p_2$, and $r>0$, we have
	\[ \| f \|_{X^{p_1,q}_r} \le C r^{d \lc \frac{1}{p_1}-\frac{1}{p_2} \rc} \| f \|_{X^{p_2,q}_r}, \] where the constant $C>0$ depends only on $d$. 
\end{proposition}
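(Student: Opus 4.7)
The plan is to apply Hölder's inequality on the ball $B_r$ pointwise in $x$ and then take the outer $L^q$ norm. Since $B_r \subset \bR^d$ has finite Lebesgue measure $|B_r| = c_d r^d$ for some dimensional constant $c_d>0$, for any function $g$ on $B_r$ and exponents $1 \le p_1 \le p_2 \le \infty$, the classical Hölder inequality on finite measure spaces gives
\[
\|g\|_{L^{p_1}(B_r)} \le |B_r|^{1/p_1 - 1/p_2}\,\|g\|_{L^{p_2}(B_r)} = c_d^{\,1/p_1 - 1/p_2}\, r^{d(1/p_1 - 1/p_2)}\,\|g\|_{L^{p_2}(B_r)}.
\]

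The key step is to apply this with $g = T_{-x}f$ (restricted to $B_r$), for each fixed $x \in \rd$. This yields
\[
\|T_{-x}f\|_{L^{p_1}(B_r)} \le C\, r^{d(1/p_1 - 1/p_2)}\,\|T_{-x}f\|_{L^{p_2}(B_r)},
\]
with $C = c_d^{\,1/p_1 - 1/p_2}$ depending only on $d$ (and in fact uniformly bounded in $p_1,p_2$ since $c_d$ is a fixed positive constant, so the exponent $1/p_1 - 1/p_2 \in [0,1]$).

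Finally, I would raise both sides to the $q$-th power, integrate in $x$ over $\rd$, and take the $q$-th root, invoking the definition \eqref{eq X norm} of the $\xpqr$ norms. Since the factor $C r^{d(1/p_1-1/p_2)}$ does not depend on $x$, it factors out of the outer integral and we obtain
\[
\|f\|_{X^{p_1,q}_r} \le C\, r^{d(1/p_1-1/p_2)}\,\|f\|_{X^{p_2,q}_r}.
\]
The case $q=\infty$ is handled by the obvious modification, replacing the outer integral with an essential supremum. There is no real obstacle here; the only mild subtlety is bookkeeping the boundary cases $p_1=\infty$ or $p_2=\infty$, where the exponent $1/p_i$ is interpreted as zero and Hölder's inequality on $B_r$ still applies in the standard way.
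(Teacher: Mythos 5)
Your proof is correct and follows essentially the same route as the paper: the pointwise (in $x$) H\"older inequality on the finite-measure ball $B_r$, giving the factor $|B_r|^{1/p_1-1/p_2} = (c_d r^d)^{1/p_1-1/p_2}$, followed by taking the outer $L^q$ norm. The paper phrases the H\"older step via $\|T_{-x}f\cdot\mathbbm{1}_{B_r}\|_{L^{p_1}} \le \|T_{-x}f\|_{L^{p_2}(B_r)}\|\mathbbm{1}_{B_r}\|_{L^\rho}$ with $1/p_1 = 1/p_2 + 1/\rho$, which is the same computation.
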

\begin{proof} Fix $x \in \rd$ and consider the mapping $h_x \colon y \mapsto |f(x+y)|$. The standard H\"older inequality on the ball $B_r$  yields, with $\rho$ such that $1/p_1 = 1/p_2 + 1/\rho$,
	\begin{align*}
		\| T_{-x} f \|_{L^{p_1}(B_r)} & = \|h_x \cdot \mathbbm{1}_{B_r} \|_{L^{p_1}(B_r)} \\
		& \le \| T_{-x} f \|_{L^{p_2}(B_r)} \|1_{B_r}\|_{L^\rho} \\
		& \le (C r^d)^{\lc \frac{1}{p_1}-\frac{1}{p_2}\rc } \| T_{-x} f \|_{L^{p_2}(B_r)},
	\end{align*} where $C$ is the volume of the $d$-ball with radius $1$. The claim thus follows. 
\end{proof}

In the following results we illustrate the behaviour of the spaces $\xpqr$ under convolution and dilations. In fact, the case with $r=1$ is covered by the standard theory of amalgam spaces (cf.\ \cite{fei83,heil} and \cite[Proposition 2.2]{cn sharp} respectively), hence the result for $r \ne 1$ follows by rescaling the norms in accordance with Lemma \ref{lem rescal norm}. 
\begin{proposition}\label{prop conv spaces}
	For any $r>0$ and $1 \le p_1,p_2,p,q_1,q_2,q \le \infty$ such that 
	\[ \frac{1}{p_1} + \frac{1}{p_2} = 1 + \frac{1}{p}, \qquad \frac{1}{q_1} + \frac{1}{q_2} = 1 + \frac{1}{q}, \] we have
	\[ \| f*g \|_{\xpqr} \le C r^{-d} \| f \|_{X^{p_1,q_1}_r} \|g \|_{X^{p_2,q_2}_r},  \] for a constant $C>0$ that depends only on $d$.
\end{proposition}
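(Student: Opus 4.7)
The plan is to reduce the estimate at scale $r$ to the classical convolution inequality for the Wiener amalgam space $X^{p,q} = X^{p,q}_1$, which under the stated Hölder-type relations reads $\| f*g \|_{X^{p,q}} \le C \| f \|_{X^{p_1,q_1}} \|g\|_{X^{p_2,q_2}}$ and is part of the standard theory (cf.\ \cite{fei83,heil}). Lemma \ref{lem rescal norm} already tells us how to pass between $X^{p,q}_r$ and $X^{p,q}$ by means of the dilation $D_r$, so the task reduces to tracking how $D_r$ interacts with convolution and then reading off the correct power of $r$.

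First I would record the elementary scaling identity for convolution: a change of variables gives
\[
D_r(f*g)(x) = (f*g)(rx) = \int f(rx-y)g(y)\,dy = r^d \int f(r(x-z))g(rz)\,dz = r^d (D_r f * D_r g)(x).
\]
Applying Lemma \ref{lem rescal norm} to $f*g$ and then the $r=1$ convolution inequality to $D_r f * D_r g$ yields
\[
\| f*g \|_{X^{p,q}_r} = r^{d(1/p+1/q)}\|D_r(f*g)\|_{X^{p,q}} = r^{d(1/p+1/q)+d}\|D_r f * D_r g\|_{X^{p,q}}
\]
\[
\le C\, r^{d(1/p+1/q)+d}\|D_r f\|_{X^{p_1,q_1}}\|D_r g\|_{X^{p_2,q_2}}.
\]

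Finally, one inverts Lemma \ref{lem rescal norm} to replace each $\|D_r f\|_{X^{p_i,q_i}}$ by $r^{-d(1/p_i+1/q_i)}\|f\|_{X^{p_i,q_i}_r}$, and similarly for $g$. The total exponent of $r$ becomes
\[
d\Bigl[\tfrac{1}{p}+\tfrac{1}{q}+1-\bigl(\tfrac{1}{p_1}+\tfrac{1}{q_1}\bigr)-\bigl(\tfrac{1}{p_2}+\tfrac{1}{q_2}\bigr)\Bigr] = d\bigl[(\tfrac{1}{p}-\tfrac{1}{p_1}-\tfrac{1}{p_2}) + (\tfrac{1}{q}-\tfrac{1}{q_1}-\tfrac{1}{q_2}) + 1\bigr] = d[-1 - 1 + 1] = -d,
\]
by the Hölder-type relations on the exponents, which is exactly the claimed factor $r^{-d}$. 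There is no real obstacle here: the only thing to watch is the bookkeeping of the dimensional exponents, and the endpoint cases $p_i=\infty$ or $q_i=\infty$ require only obvious modifications in Lemma \ref{lem rescal norm}.
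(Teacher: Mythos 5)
Your proof is correct and is essentially the paper's argument: the paper likewise invokes the classical convolution relation for $X^{p_1,q_1}*X^{p_2,q_2}\hookrightarrow X^{p,q}$ at scale $r=1$ from \cite{fei83,heil} and states that the case $r\neq 1$ follows by rescaling via Lemma \ref{lem rescal norm}. You have simply carried out that rescaling explicitly (including the identity $D_r(f*g)=r^d\,D_rf*D_rg$ and the exponent bookkeeping), and the computation checks out.
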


\begin{proposition}\label{prop dil spaces}
	For any $r,s>0$ and $1 \le p,q \le \infty$ we have
\[ \| D_s f \|_{\xpqr} \le \begin{cases} C s^{-d \max(1/p,1/q)} \|f \|_{\xpqr} & (0<s\le 1) \\ C s^{-d \min(1/p,1/q)} \|f \|_{\xpqr}  & (s \ge 1) \end{cases}, \]
for a constant $C>0$ that depends only on $d$. 
\end{proposition}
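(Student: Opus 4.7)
The plan is to reduce the statement to the case $r=1$, which is the content of the cited Proposition 2.2 of [cn sharp], by conjugating with the rescaling identity of Lemma \ref{lem rescal norm}.

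First, I would apply Lemma \ref{lem rescal norm} with $D_s f$ in place of $f$ to write
\[
\| D_s f \|_{\xpqr} = r^{d(1/p+1/q)} \| D_r(D_s f) \|_{\xpq}.
\]
Since scalar dilations satisfy $D_r D_s = D_{rs} = D_s D_r$, the right-hand side equals $r^{d(1/p+1/q)} \| D_s(D_r f) \|_{\xpq}$. This converts the problem into a dilation estimate on the standard amalgam space $\xpq$, applied to the auxiliary function $g \coloneqq D_r f$.

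Next, I would invoke the known dilation bound for the $r=1$ amalgam space: there exists $C>0$ depending only on $d$ such that
\[
\| D_s g \|_{\xpq} \le \begin{cases} C s^{-d\max(1/p,1/q)} \| g \|_{\xpq} & (0<s\le 1) \\ C s^{-d\min(1/p,1/q)} \| g \|_{\xpq} & (s \ge 1) \end{cases}.
\]
Applying this to $g = D_r f$ yields the desired power of $s$ times $\| D_r f \|_{\xpq}$. Finally, I would use Lemma \ref{lem rescal norm} a second time in the form $\| D_r f \|_{\xpq} = r^{-d(1/p+1/q)} \| f \|_{\xpqr}$. Substituting back, the two $r^{\pm d(1/p+1/q)}$ factors cancel and the claimed estimate falls out with the same constant $C$.

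The main obstacle is essentially nonexistent: this is a bookkeeping argument that transfers the known $r=1$ result through conjugation by the dilation $D_r$. The only point that warrants care is verifying that scalar dilations compose as $D_r D_s = D_{rs}$ (so the $r$-factors introduced by the two applications of Lemma \ref{lem rescal norm} indeed cancel), and that the constant $C$ emerging from the $r=1$ dilation bound is independent of $r$, which it is by hypothesis.
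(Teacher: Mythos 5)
Your proposal is correct and is exactly the argument the paper intends: the authors state that the $r=1$ case is \cite[Proposition 2.2]{cn sharp} and that the general case ``follows by rescaling the norms in accordance with Lemma \ref{lem rescal norm}'', which is precisely your conjugation-by-$D_r$ bookkeeping with the $r^{\pm d(1/p+1/q)}$ factors cancelling.
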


\section{$L^\infty$ deformations and the space $\xopt$}\label{sec xopt}
Let us consider the  class of deformation mappings $F_\tau$ associated with distortion functions $\tau\colon \rd \to \rd$ by setting
\[ F_\tau f(x) \coloneqq f(x-\tau(x)),
 \]
where $f\colon \rd \to \bC$.

We prove that the class $\xopt$ is the optimal choice as far as sensitivity bounds for arbitrary bounded deformations are concerned. The second part of the following result can be regarded as a linearization of a maximal operator (cf.\ \cite[Section 6.1.3]{grafakos_mod}). 

\begin{proposition}\label{maint xopt}
	We have
	\begin{equation}\label{eq xopt bound}
		 \| F_\tau f \|_{L^2} \le \|f \|_{\xopt}, \quad r=\| \tau\|_{L^\infty},
	\end{equation}
	 for every $f \in \xopt\cap C(\rd)$ and $\tau \in L^\infty(\rd;\rd)$. 
	
	More precisely, for every function $f \in \xopt\cap C(\rd)$, we have the characterization
	\begin{equation}\label{eq xopt char}
		\| f \|_{\xopt} = \max \{ \| F_\tau f \|_{L^2} \, : \, \tau\in L^\infty(\rd;\rd), \, \|\tau\|_{L^\infty} \le r \}.
	\end{equation}
\end{proposition}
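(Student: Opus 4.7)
The plan is to reduce the bound \eqref{eq xopt bound} to a pointwise estimate, and then to establish the characterization \eqref{eq xopt char} by producing an extremal deformation via a measurable selection argument. The continuity of $f$ enters essentially in both steps, since it allows one to replace essential suprema of $T_{-x}f$ over the open ball $B_r$ by genuine suprema of $|f(x+\cdot)|$ over the closed ball $\overline{B_r}$.

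For the first part, I would fix $x\in\rd$ and write $x-\tau(x)=x+z(x)$ with $z(x)\coloneqq -\tau(x)$. Since $|\tau(x)|\le r$ for almost every $x$, we have $z(x)\in\overline{B_r}$ a.e., and continuity of $f$ yields
\[
|F_\tau f(x)|=|f(x+z(x))|\le \sup_{y\in\overline{B_r}}|f(x+y)|=\esssup_{y\in B_r}|f(x+y)|=\|T_{-x}f\|_{L^\infty(B_r)}.
\]
Squaring and integrating over $\rd$ gives \eqref{eq xopt bound}; the same pointwise estimate, applied to any $\tau$ with $\|\tau\|_{L^\infty}\le r$, also delivers the $\le$ direction of \eqref{eq xopt char}.

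For the reverse inequality in \eqref{eq xopt char} the goal is to exhibit a specific $\tau_\star\in L^\infty(\rd;\rd)$ with $\|\tau_\star\|_{L^\infty}\le r$ realizing equality in the above pointwise bound. For each $x\in\rd$, set
\[
\Gamma(x)\coloneqq\bigl\{y\in\overline{B_r}:|f(x+y)|=\sup_{z\in\overline{B_r}}|f(x+z)|\bigr\},
\]
which is a non-empty closed subset of $\overline{B_r}$ by compactness and continuity. I would then invoke a measurable selection theorem (of Kuratowski--Ryll-Nardzewski type) to extract a Borel measurable $y_\star:\rd\to\overline{B_r}$ with $y_\star(x)\in\Gamma(x)$ for every $x$; setting $\tau_\star(x)\coloneqq -y_\star(x)$ yields a vector field in $L^\infty(\rd;\rd)$ with $\|\tau_\star\|_{L^\infty}\le r$ and
\[
|F_{\tau_\star}f(x)|=|f(x+y_\star(x))|=\|T_{-x}f\|_{L^\infty(B_r)},
\]
so integrating gives $\|F_{\tau_\star}f\|_{L^2}=\|f\|_{\xopt}$.

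The main obstacle will be justifying the measurability of $y_\star$, since the multifunction $\Gamma$ need not be upper semicontinuous when $f$ is merely continuous. The cleanest route is to check that the graph of $\Gamma$ is Borel: the function $M(x)\coloneqq \sup_{y\in\overline{B_r}}|f(x+y)|$ is lower semicontinuous on $\rd$ (an approximate maximizer at $x_0$ persists as a near-maximizer at nearby $x$ by continuity of $f$), hence $(x,y)\mapsto M(x)-|f(x+y)|$ is lower semicontinuous on $\rd\times\overline{B_r}$ and its zero set is $G_\delta$; a measurable section then exists by the standard application of Kuratowski--Ryll-Nardzewski. Alternatively, one can define $y_\star(x)$ as the lexicographically smallest point of $\Gamma(x)$, built coordinate by coordinate through nested compact slices, and verify measurability at each step from this $G_\delta$ graph structure.
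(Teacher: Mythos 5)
Your argument is correct and follows essentially the same route as the paper: the pointwise bound $|f(x-\tau(x))|\le\sup_{|y|\le r}|f(x-y)|$ for the upper estimate, and a measurable selection of the argmax of $y\mapsto|f(x-y)|$ on $\overline{B_r}$ for the reverse inequality in \eqref{eq xopt char} (the paper invokes the measurable maximum theorem of Aliprantis--Border, which is the same device as your Kuratowski--Ryll-Nardzewski application, just packaged differently). Your extra verification that the graph of $\Gamma$ is Borel via lower semicontinuity of $M$ is sound and simply spells out what the cited theorem handles.
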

\begin{remark}
	Note that the continuity assumption on $f \in \xopt$ is essential in the statement, otherwise $f(x-\tau(x))$ may not even be well defined in $L^2$ (i.e., independent of the representative $f$), as evidenced by the case $\tau(x)=x$ for $x \in B_R$ and small $R>0$. See also Figure \ref{figura intro} in this connection.
\end{remark}
\begin{proof}[Proof of Proposition \ref{maint xopt}]
	It is clear that, for almost every $x\in\rd$, 
	\[ |f(x-\tau(x))| \le \sup \{ |f(x-y)| : y \in \rd,\, |y|\le \|\tau\|_{L^\infty} \}, \] and thus \eqref{eq xopt bound} follows after taking the $L^2$ norm (the above supremum is the same as the essential supremum because $f$ is continuous).   
	
	For what concerns \eqref{eq xopt char}, it is enough to prove that 
	\[ 		\| f \|_{\xopt} \le \max \{ \| F_\tau f \|_{L^2} \, : \, \tau\in L^\infty(\rd;\rd), \, \|\tau\|_{L^\infty} \le r \}. \]
	To this aim, notice that if we could design a measurable correspondence $\tau$ between $x \in \rd$ and a point $y^* =\tau(x)\in \overline{B_r}$ where the function $\overline{B_r} \ni y \mapsto |f(x-y)|$ attains its maximum, then 
	\[ \max_{|y|\le r} |f(x-y)| = |f(x-\tau(x))| = |F_\tau(x)|, \] and the desired conclusion would follow once taking the $L^2$ norm. The existence of such a measurable selector is a consequence of the measurable maximum theorem \cite[Theorem 18.19]{aliprantis} (in fact, an easier argument would give \eqref{eq xopt char} with the supremum in place of the maximum, cf.\ \cite[Section 6.1.3]{grafakos_mod}).
	\end{proof}

The following result provides a sensitivity bound for $L^2$ functions which are locally (i.e., on every compact subset) Lipschitz continuous, uniformly at the deformation scale. It should be compared with the result in \cite{koller}, valid for functions in the Sobolev space $H^1(\rd)$ and deformations $\tau\in C^1(\rd;\rd)$ with $\|\nabla \tau\|_{L^\infty}\leq 1/2$, hence regular.

\begin{proposition}\label{maint grad xopt}
	There exists a constant $C>0$ such that 
\begin{equation}\label{eq maint grad xopt}
	\| F_\tau f - f \|_2 \le C \|\tau\|_{L^\infty} \|\nabla f \|_{\xopt}, \quad r=\|\tau\|_{L^\infty},
\end{equation} for every $\tau \in L^\infty(\rd;\rd)$ and every function $f \in \xopt$ such that $\| \nabla f \|_{\xopt} < \infty$. 
\end{proposition}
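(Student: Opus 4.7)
\textit{Proof plan.} The strategy is to obtain a pointwise bound of $|f(x)-f(x-\tau(x))|$ along the segment connecting $x$ and $x-\tau(x)$, then to square and integrate, recognizing the right-hand side as $\|\nabla f\|_{\xopt}$. The whole argument yields in fact the constant $C=1$.

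First, I would justify that $f$ may be taken locally Lipschitz: the hypothesis $\|\nabla f\|_{\xopt}<\infty$ forces $\nabla f\in L^\infty_{\loc}(\rd)$, so by standard Sobolev/Morrey-type results the distribution $f$ admits a continuous (in fact locally Lipschitz) representative, which we now fix. In particular $F_\tau f$ is well defined as a measurable function on $\rd$, regardless of the representative of $\tau$, and the pointwise composition is meaningful.

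Setting $r=\|\tau\|_{L^\infty}$, for each fixed $x\in\rd$ the whole segment $\{x-t\tau(x):t\in[0,1]\}$ lies in the closed ball $\overline{B(x,r)}$. The core pointwise estimate I would establish is
\[
|f(x)-f(x-\tau(x))|\le |\tau(x)|\cdot \|T_{-x}\nabla f\|_{L^\infty(B_r)},
\]
which is the standard fact that, on a convex set, the Lipschitz constant of a locally Lipschitz function coincides with the $L^\infty$-norm of its gradient on that set. Squaring and integrating in $x$ yields
\[
\|F_\tau f-f\|_{L^2}^2\le \int_{\rd}|\tau(x)|^2\,\|T_{-x}\nabla f\|_{L^\infty(B_r)}^2\,dx \le \|\tau\|_{L^\infty}^2\,\|\nabla f\|_{\xopt}^2,
\]
by the very definition \eqref{eq X norm} of the norm in $X^{\infty,2}_r$. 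Taking square roots gives \eqref{eq maint grad xopt}.

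The main (and really only) subtle point is the pointwise inequality above: $\nabla f$ is defined only a.e.\ in $\rd$, while the segment $\{x-t\tau(x):t\in[0,1]\}$ is of measure zero, so one cannot directly apply the classical fundamental theorem of calculus along it. The cleanest way around this is to mollify: introduce $f_\epsilon=f*\rho_\epsilon$ with a standard compactly supported mollifier $\rho_\epsilon$. Then $f_\epsilon\in C^\infty(\rd)$, the FTC along the segment gives
\[
|f_\epsilon(x)-f_\epsilon(x-\tau(x))|\le |\tau(x)|\sup_{t\in[0,1]}|\nabla f_\epsilon(x-t\tau(x))|,
\]
and the pointwise bound $|\nabla f_\epsilon(y)|\le (\rho_\epsilon*|\nabla f|)(y)\le \|T_{-y}\nabla f\|_{L^\infty(B_\epsilon)}$ together with the essential-supremum structure yields, for $y\in\overline{B(x,r)}$, $|\nabla f_\epsilon(y)|\le \|T_{-x}\nabla f\|_{L^\infty(B_{r+\epsilon})}$. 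Letting $\epsilon\to 0^+$ (using the continuity of $f$ to pass the left-hand side to $|f(x)-f(x-\tau(x))|$ and the monotonicity/continuity properties of the essential supremum on the right) produces the desired pointwise estimate, after which the integration argument above concludes the proof.
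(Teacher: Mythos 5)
Your argument is correct in substance and lands on the same pivotal pointwise estimate as the paper, namely $|f(x-y)-f(x)|\le C\,|y|\,\|\nabla f\|_{L^\infty(B(x,r))}$ for $|y|\le r$, followed by setting $y=\tau(x)$ and taking the $L^2$ norm in $x$. The difference is purely in how that local Lipschitz estimate is justified: the paper invokes the $L^\infty$ Poincar\'e inequality on a ball (citing Evans), adding and subtracting the average $\overline{f}_{x,r}$, which is shorter but yields a dimensional constant; you prove it by hand via mollification and the fundamental theorem of calculus along the segment, which is more self-contained and, when done carefully, gives the sharp constant $C=1$. Both routes require the same preliminary observation (also made in the paper, just before the proof) that $\|\nabla f\|_{\xopt}<\infty$ forces $\nabla f\in L^\infty_{\loc}$ and hence a locally Lipschitz continuous representative of $f$.

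One technical point in your limiting step deserves care: you bound $|\nabla f_\epsilon(y)|$ for $y\in\overline{B(x,r)}$ by $\|T_{-x}\nabla f\|_{L^\infty(B_{r+\epsilon})}$ and then let $\epsilon\to 0^+$, appealing to ``continuity of the essential supremum.'' In general $\esssup_{B_{r+\epsilon}}g$ need \emph{not} converge to $\esssup_{B_r}g$ as $\epsilon\to 0^+$ (consider $g=\mathbbm{1}_{\rd\setminus \overline{B_r}}$), so this step as written does not close. The standard fix is to shrink the segment rather than enlarge the ball: for $t_0<1$ the segment from $x$ to $x-t_0\tau(x)$ together with its $\epsilon$-neighbourhood lies inside the open ball $B(x,r)$ for $\epsilon$ small, so the FTC bound already involves $\|T_{-x}\nabla f\|_{L^\infty(B_r)}$; one then lets $\epsilon\to0$ and $t_0\to1$ using the continuity of $f$. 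Alternatively, since the Proposition only asserts the existence of \emph{some} constant $C$, you could simply keep the radius $2r$ throughout and absorb the loss via a finite covering of $B_{2r}$ by translates of $B_r$, at the price of a dimensional constant. Either repair is routine, so I regard this as a fixable imprecision rather than a gap in the approach.
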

Observe that the condition  $\| \nabla f \|_{\xopt} < \infty$ implies that $\nabla f\in L^\infty_{\loc}(\rd)$, and therefore $f$ is locally Lipschitz continuous after possibly being redefined on a set of measure zero (cf.\ \cite[Theorem 4, page 294]{evans}), in particular $f$ is continuous. In the following we will always identify $f$ with its continuous version. Also, we set \begin{equation}
		\| \nabla f\|_{\xopt} \coloneqq \| | \nabla f| \|_{\xopt}.
	\end{equation}

\begin{proof}[Proof of Proposition \ref{maint grad xopt}]
For $x\in\rd$, $r>0$ let $B(x,r)$ be the open ball in $\rd$ of radius $r$ and center $x$.
By the Poincar\'e inequality for a ball\footnote{That is $\|f - \overline{ f}_{x,r}\|_{L^\infty(B(x,r))}\leq C r\|\nabla f\|_{L^\infty(B(x,r))}$ where $\overline{ f}_{x,r}$ is the average of $f$ over $B(x,r)$. Since under our assumption $f$ is continuous in $\rd$, we can replace the $L^\infty$ norm in the left-hand side by the supremum of $|f|$, and then one obtains \eqref{eq stima lipsc} from the triangle inequality (by adding and subtracting $\overline{ f}_{x,r}$).} (cf.\ \cite[Theorem 2, page 291]{evans}) we see that there exists a constant $C>0$ such that, for every $r>0$ and $x\in\rd$,
\begin{equation}\label{eq stima lipsc}
    |f(x-y)-f(x)|\leq Cr\|\nabla f\|_{L^\infty(B(x,r))}, \quad |y|\leq r.
\end{equation}
Setting $y=\tau(x)$, $r=\|\tau\|_{L^\infty}$ and taking the $L^2$ norm lead to the desired conclusion. 
\end{proof}
	
\section{Multiresolution approximation spaces}\label{sec mult}
Fix $\phi \in L^2(\rd)$ and recall \cite{mallbook} that the associated approximation space $U_s$ at scale $s>0$ is defined as follows:
\[ U_s \coloneqq \overline{\text{span} \{ \phi_{s,n} \}_{n \in \zd}}, \qquad \phi_{s,n}(x) \coloneqq s^{-d/2} T_{ns} D_{1/s} \phi(x) = s^{-d/2} \phi \lc \frac{x-ns}{s} \rc. \] 
In the rest of the paper we are going to deal with the following assumptions on $\phi$. 

\begin{asu} \label{asu riesz}
	There exist constants $A, B >0$ such that 
	\begin{equation}\label{eq cond riesz}
		A \le \sum_{k \in \zd} |\hat{\phi}(\omega-2\pi k)|^2 \le B \quad \text{for a.e. } \omega \in \rd. \end{equation} This is \emph{equivalent} to assuming that $\{ \phi_{s,n} \}$ is a Riesz basis for $U_s$ (cf.\ \cite[Theorem 3.4]{mallbook} in the case where $d=1$, while the result for $d>1$ follows by direct extension of the one-dimensional one).
\end{asu}
We further assume one of the following regularity/decay conditions on $\phi$. 

\begin{asu} \label{asu rev 1} \textit{At least one} of the following conditions holds. 
	\begin{enumerate}[(i)]
	\item $\phi$ belongs to the Wiener space: \begin{equation}\label{eq cond rev 1}
		\phi \in X^{\infty,1},
	\end{equation}  
	in particular $\phi$ is locally bounded and has a $L^1$ decay.
	\item There exist $\alpha > 1/2$ and $B'>0$ such that \begin{equation}\label{eq cond rev 2}
		\sum_{k \in \zd} |(v^\alpha \hat{\phi})(\omega - 2\pi k)|^2 \le B' \quad \text{for a.e. } \omega \in [0,2\pi]^d,
	\end{equation} where we introduced the weight function $v(\omega) = \la \omega_1 \ra \cdots \la \omega_d \ra$, $\omega \in \rd$.
	\end{enumerate}
\end{asu}

\begin{asu}\label{asu rev 2}
 	At least one of the conditions \eqref{eq cond rev 1} and \eqref{eq cond rev 2} of Assumption \ref{asu rev 1} is satisfied for all $\partial_j \phi$, $j=1,\ldots,d$, in place of $\phi$. 
\end{asu}

\begin{example}\label{rem examples} This is a convenient stage where to present some examples of functions satisfying the assumptions. Generally speaking, \eqref{eq cond rev 1} is satisfied by any function $\phi \in L^\infty(\rd)$ with compact support, while the same condition on the Fourier side (i.e., $\hat{\phi} \in L^\infty(\rd)$ with compact support) guarantees that \ref{eq cond rev 2} holds. To be more concrete, let us provide some standard examples in dimension $d=1$ --- Assumption \ref{asu riesz} will be satisfied in all cases (cf.\ \cite[Section 3.1.3, pages 69,70]{mallbook}).
	\begin{itemize}
		\item The choice $\phi = \mathbbm{1}_{[0,1]}$, leading to piecewise constant approximations (block sampling), is easily seen to satisfy \eqref{eq cond rev 1} but not \eqref{eq cond rev 2} for any $\alpha>1/2$, nor Assumption \ref{asu rev 2}.
		\item The normalized sinc function $\displaystyle \phi(x) = \frac{\sin(\pi x)}{\pi x}$, corresponding to Shannon approximations (i.e., band-limited functions), satisfies \eqref{eq cond rev 2} for every $\alpha >0$, as well as Assumption \ref{asu rev 2}, but not \eqref{eq cond rev 1}.
		\item The B-spline $\phi$ of degree $n$, obtained by $n+1$ convolutions of $\mathbbm{1}_{[0,1]}$ with itself and centering at $0$ or $1/2$, can be characterized by its Fourier transform: 
		\[ \hat{\phi}(\omega) = \lc \frac{\sin(\omega/2)}{\omega/2} \rc^{n+1} e^{-i\varepsilon \omega/2}, \quad \varepsilon = \begin{cases}
			1 & (n\text{ is even}) \\ 0 & (n\text{ is odd})
		\end{cases}. \] We see that if $n\ge 1$ then both \eqref{eq cond rev 1} and \eqref{eq cond rev 2} are satisfied (for $\alpha < n+1/2$), as well as Assumption \ref{asu rev 2} (the case $n=0$ is covered by the previous case of $\phi = \mathbbm{1}_{[0,1]}$). 
	\end{itemize} 
\end{example}
In Assumption \ref{asu rev 1} we introduced the weight function $v$. Let us now define a companion Sobolev space, for $\alpha\in\mathbb{R}$, $\alpha\geq 0$:
\[
H^\alpha_\otimes(\rd)\coloneqq\{f\in L^2(\rd):\ \|f\|_{H^\alpha_\otimes}\coloneqq\|v^\alpha \hat{f} \|_{L^2}<\infty\}.
\] Roughly speaking, $H^\alpha_\otimes(\rd)$ consists of functions in $L^2(\rd)$ which have at least $\alpha$ (possibly fractional) derivatives in the directions of the axes in $L^2(\rd)$. It is easy to realize that this space contains functions in the usual Sobolev space $H^{d\alpha}(\rd)$ as well as tensor products $\phi_1\otimes \ldots \otimes \phi_d$, with  $\phi_j\in H^\alpha(\mathbb{R})$, $j=1,\ldots, d$.

\begin{proposition}\label{immersioni}
If $\alpha>1/2$ we have the embedding $H^\alpha_\otimes(\rd)\hookrightarrow L^\infty(\rd)\cap C(\rd)$, as well as
\[
H^\alpha_\otimes(\rd) \hookrightarrow X^{\infty,2}.
\]
\end{proposition}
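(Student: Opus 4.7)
For the first embedding the plan is to exhibit $\widehat f\in L^1(\rd)$, whence $f$ coincides a.e.\ with a bounded continuous function via Fourier inversion. The crucial observation is that $\alpha>1/2$ is exactly the threshold making the weight $v^{-\alpha}$ square-integrable, since by the tensor structure of $v$
\[
\|v^{-\alpha}\|_{L^2}^2=\ird\prod_{j=1}^d\la\omega_j\ra^{-2\alpha}\,d\omega=\lc\int_{\bR}\la t\ra^{-2\alpha}\,dt\rc^d<\infty.
\]
The factorization $\widehat f=v^{-\alpha}\cdot(v^\alpha\widehat f)$ followed by Cauchy-Schwarz then yields $\|\widehat f\|_{L^1}\le\|v^{-\alpha}\|_{L^2}\|f\|_{H^\alpha_\otimes}$, as desired.

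For the $X^{\infty,2}$ embedding I would localize and reduce to the first part. Pick a tensor-product bump $\phi(y)=\chi(y_1)\cdots\chi(y_d)$ with $\chi\in C_c^\infty(\bR)$ equal to $1$ on $[-1,1]$, so that $\phi\equiv 1$ on $B_1$, and set $G_x(y)\coloneqq \phi(y)f(x+y)$. Then $\|T_{-x}f\|_{L^\infty(B_1)}\le\|G_x\|_{L^\infty}$ and, by the first part, $\|G_x\|_{L^\infty}\lesssim\|v^\alpha\widehat{G_x}\|_{L^2}$. A short computation using $\widehat{f(x+\cdot)}(\eta)=e^{ix\eta}\widehat f(\eta)$ gives
\[
\widehat{G_x}(\omega)=(2\pi)^{-d}\ird e^{ix\eta}\widehat\phi(\omega-\eta)\widehat f(\eta)\,d\eta,
\]
exhibiting $\widehat{G_x}(\omega)$ as the inverse Fourier transform in $\eta$ of $\widehat\phi(\omega-\eta)\widehat f(\eta)$ evaluated at $x$. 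Squaring, integrating in $x$, applying Fubini and then Plancherel in $x$ at each fixed $\omega$ reduces matters to
\[
\ird\|T_{-x}f\|_{L^\infty(B_1)}^2\,dx\lesssim \ird|\widehat f(\eta)|^2\lc\ird v(\omega)^{2\alpha}|\widehat\phi(\omega-\eta)|^2\,d\omega\rc d\eta,
\]
so the whole embedding comes down to a uniform pointwise control of the inner integral by $v(\eta)^{2\alpha}$.

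The technical core, which I expect to be the main point of difficulty, is exactly the uniform bound
\[
\ird v(\omega)^{2\alpha}|\widehat\phi(\omega-\eta)|^2\,d\omega\lesssim v(\eta)^{2\alpha}.
\]
My plan is to derive it from the componentwise Peetre-type inequality $\la\omega_j\ra\le\sqrt{2}\,\la\omega_j-\eta_j\ra\la\eta_j\ra$ which, multiplied across $j=1,\dots,d$ and raised to the power $2\alpha$, gives the submultiplicative estimate $v(\omega)^{2\alpha}\le 2^{d\alpha}v(\omega-\eta)^{2\alpha}v(\eta)^{2\alpha}$; a change of variables then reduces the inner integral to $2^{d\alpha}v(\eta)^{2\alpha}\|v^\alpha\widehat\phi\|_{L^2}^2$, the last factor being finite because $\phi$ is Schwartz. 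The true obstacle is the mismatch between the global $L^2$ decay demanded by $X^{\infty,2}$ and the purely uniform control delivered by the first embedding: the localization $G_x$ buys this decay via Plancherel at the cost of a Fourier-side convolution with $\widehat\phi$, and only the tensor structure of $v$ (mirroring that of $\phi$) makes this convolution estimate close up cleanly.
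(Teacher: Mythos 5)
Your proof is correct. The first embedding is argued exactly as in the paper: $\|\widehat f\|_{L^1}\le\|v^{-\alpha}\|_{L^2}\|v^\alpha\widehat f\|_{L^2}$ with $v^{-\alpha}\in L^2$ precisely when $\alpha>1/2$, and continuity via Fourier inversion (the paper gets continuity by density of Schwartz functions, which is equivalent in substance). For the $X^{\infty,2}$ embedding, your overall route coincides with the paper's --- localize with a cutoff $g$ equal to $1$ on $B_1$, bound $\|T_{-x}f\|_{L^\infty(B_1)}$ by $\|f\cdot T_x g\|_{H^\alpha_\otimes}$ via the first part, and then control $\|\,\|f\cdot T_x g\|_{H^\alpha_\otimes}\|_{L^2_x}$ by $\|f\|_{H^\alpha_\otimes}$ --- but you part ways at the last step: the paper simply cites Proposition 11.3.1(c) of Gr\"ochenig's book for this inequality, whereas you prove it from scratch by writing $\widehat{G_x}(\omega)$ as an inverse Fourier transform in $\eta$, applying Fubini and Plancherel in $x$ at fixed $\omega$, and closing the resulting convolution estimate with the componentwise Peetre inequality $v(\omega)^{2\alpha}\le 2^{d\alpha}v(\omega-\eta)^{2\alpha}v(\eta)^{2\alpha}$ and the finiteness of $\|v^\alpha\widehat\phi\|_{L^2}$ for Schwartz $\phi$. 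Your version is self-contained and makes explicit why the tensor structure of $v$ matters (submultiplicativity factor by factor), at the cost of a page of computation that the citation avoids; both are valid, and your Plancherel-plus-Peetre argument is essentially the standard proof of the cited lemma specialized to $p=q=2$.
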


\begin{proof}
The embedding in $L^\infty$ follows at once from the chain of inequalities
\[
\| f \|_{L^\infty} \lesssim \| \hat{f} \|_{L^1} \le \| v^{-\alpha} \|_{L^2} \| \hat{f}v^{\alpha}\|_{L^2} \]
and the fact that $v^{-\alpha} \in L^2(\rd)$ if $\alpha > 1/2$. 
The embedding in $C(\rd)$ is then clear because the space of Schwartz functions is easily seen to be dense in $H^\alpha_\otimes(\rd)$.

Concerning the embedding in $X^{\infty,2}$, let  $g \in C^\infty_c(\rd)$, with $g=1$ on $B_1$. Then 
\[
 \| f \|_{X^{\infty,2}} \le \| \| T_{-x}f \cdot g \|_{L^\infty} \|_{L^2_x} \lesssim \| \| f \cdot T_x g \|_{H^\alpha_\otimes} \|_{L^2_x} \lesssim \| f \|_{H^\alpha_\otimes}, 
  \] 
  where the last inequality is proved in \cite[Propositon 11.3.1(c)]{grobook}.
\end{proof}

We now establish a crucial reverse H\"older-type inequality for functions in $U_s$.

\begin{theorem}\label{thm rev hold} Let $\phi \in L^2(\rd)$ be such that Assumption \ref{asu riesz} is satisfied. 
	\begin{enumerate}[(i)]
		\item If Assumption \ref{asu rev 1} holds then there exists $C>0$ such that, for every $r,s >0$, 
\begin{equation}\label{eq rev hold}
	\| f \|_{\xopt} \le C (1+ r/s)^{d/2} \| f\|_{L^2}, \quad f \in U_s.
\end{equation} 
\item If Assumption \ref{asu rev 2} holds then there exists $C>0$ such that, for every $r,s >0$,
\begin{equation}\label{eq grad rev hold}
	\| \nabla f \|_{\xopt} \le C s^{-1}(1+r/s)^{d/2} \| f \|_{L^2}, \quad f \in U_s.  
\end{equation}
\end{enumerate}
\end{theorem}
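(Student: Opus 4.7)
The strategy is to reduce the general $s>0$ case to the unit-scale case $s=1$ by a dilation, and then to decompose the argument into two ingredients: an embedding $U_1 \hookrightarrow X^{\infty,2}$ (and its analogue for partial derivatives) at unit scale, together with a generic scale-change inequality between $X^{\infty,2}_\rho$ and $X^{\infty,2}$ that accounts for the factor $(1+\rho)^{d/2}$, where $\rho=r/s$.

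\emph{Reduction and scale-change.} Writing $f=\sum_n c_n \phi_{s,n}$ and setting $g(y)=s^{d/2}f(sy)=\sum_n c_n \phi(\cdot-n)\in U_1$, elementary changes of variables give $\|g\|_{L^2}=\|f\|_{L^2}$, $\|f\|_{X^{\infty,2}_r}=\|g\|_{X^{\infty,2}_{r/s}}$, and $\|\nabla f\|_{X^{\infty,2}_r}=s^{-1}\|\nabla g\|_{X^{\infty,2}_{r/s}}$. Hence the two claims reduce, for $g\in U_1$ and $\rho>0$, to showing $\|g\|_{X^{\infty,2}_\rho}\lesssim (1+\rho)^{d/2}\|g\|_{L^2}$ and its gradient analogue. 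Moreover, for any measurable $h$ and $\rho>0$ we claim $\|h\|_{X^{\infty,2}_\rho}\lesssim (1+\rho)^{d/2}\|h\|_{X^{\infty,2}}$: the estimate is trivial for $\rho\le 1$ (sup over a smaller ball), while for $\rho\ge 1$ we cover $B_\rho$ by $N\lesssim\rho^d$ unit balls $B(y_i,1)$, use
\[
\sup_{|y|\le\rho}|h(x+y)|^2\le\sum_{i=1}^N\sup_{|z|\le 1}|h(x+y_i+z)|^2,
\]
and integrate in $x$ exploiting the translation invariance of $\|\cdot\|_{X^{\infty,2}}$. It thus suffices to prove the embeddings $g\mapsto g$ and $g\mapsto\partial_j g$ from $U_1$ into $X^{\infty,2}$ under Assumptions \ref{asu rev 1} and \ref{asu rev 2}.

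\emph{The embeddings.} For $g=\sum_n c_n \phi(\cdot-n)\in U_1$, the discrete-type characterization \eqref{eq disc norm} combined with the pointwise bound $|g(x)|\le\sum_n |c_n|\,|\phi(x-n)|$ yields, via a Young-type $\ell^1\ast\ell^2\to\ell^2$ inequality on $\zd$,
\[
\|g\|_{X^{\infty,2}}\lesssim \|\phi\|_{X^{\infty,1}}\|c\|_{\ell^2}\asymp \|\phi\|_{X^{\infty,1}}\|g\|_{L^2},
\]
which is finite under \eqref{eq cond rev 1}. Under the weaker frequency-side assumption \eqref{eq cond rev 2} we instead route through Proposition \ref{immersioni} via the intermediate embedding $U_1\hookrightarrow H^\alpha_\otimes$: writing $\hat g(\omega)=\hat\phi(\omega)c(\omega)$ with $c(\omega)=\sum_n c_n e^{-in\cdot\omega}$ a $2\pi$-periodic function satisfying $\|c\|_{L^2([0,2\pi]^d)}\asymp\|c\|_{\ell^2}$, periodizing the Plancherel integral against $\hat\phi\cdot c$ gives
\[
\|g\|_{H^\alpha_\otimes}^2=\int_{[0,2\pi]^d}|c(\omega)|^2\sum_k |v^\alpha\hat\phi|^2(\omega-2\pi k)\,d\omega\le B'\|c\|_{\ell^2}^2\asymp\|g\|_{L^2}^2.
\]
For part (ii), the key observation is that $\partial_j g=\sum_n c_n (\partial_j\phi)(\cdot-n)$ with the \emph{same} coefficients $c_n$; hence repeating the argument above with $\phi$ replaced by $\partial_j\phi$---legitimate by Assumption \ref{asu rev 2}---yields $\|\partial_j g\|_{X^{\infty,2}}\lesssim\|g\|_{L^2}$ for each $j$, and the triangle inequality $|\nabla g|\le\sum_j|\partial_j g|$ concludes.

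\emph{Main obstacle.} The most delicate step is the embedding $U_1\hookrightarrow X^{\infty,2}$ under the purely frequency-side hypothesis \eqref{eq cond rev 2}: the periodized quantity $\sum_k|v^\alpha\hat\phi|^2(\omega-2\pi k)$ appearing there must be matched to a Plancherel computation for the product $\hat\phi\cdot c$ without losing decay, and this is precisely what is achieved by factoring through the tensor-type Sobolev space $H^\alpha_\otimes$ and invoking Proposition \ref{immersioni}. Everything else, namely the dilation reduction, the ball-covering argument at scale $\rho\ge 1$, and the replacement $\phi\leftrightarrow\partial_j\phi$ in part (ii), is essentially a bookkeeping matter once this embedding is in hand.
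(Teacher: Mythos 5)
Your proof is correct and follows essentially the same route as the paper's: reduce to the unit-scale basis $\sum_n a_n T_n\phi$, absorb the factor $(1+r/s)^{d/2}$ via a scale-change inequality for $X^{\infty,2}_\rho$, then prove the unit-scale embedding either through the discrete norm \eqref{eq disc norm} and $\ell^1\ast\ell^2\hookrightarrow\ell^2$ under \eqref{eq cond rev 1}, or by factoring through $H^\alpha_\otimes$ and Proposition \ref{immersioni} under \eqref{eq cond rev 2}, with part (ii) obtained by replacing $\phi$ with $\partial_j\phi$. The only (harmless) deviation is that you prove the scale-change bound $\|h\|_{X^{\infty,2}_\rho}\lesssim(1+\rho)^{d/2}\|h\|_{X^{\infty,2}}$ directly by covering $B_\rho$ with $O(\rho^d)$ unit balls, whereas the paper obtains it from Lemma \ref{lem rescal norm} combined with Proposition \ref{prop dil spaces}.
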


\begin{remark}\label{rem proj}
	Let $P_{U_s}$ be the orthogonal projection operator on $U_s$. Since $\| P_{U_s} \|_{L^2 \to L^2} = 1$, \eqref{eq rev hold} is equivalent to
	\[ \| P_{U_s}f \|_{\xopt} \le C (1+r/s)^{d/2} \| f\|_{L^2}, \quad f \in L^2(\rd). \]
\end{remark}
\begin{proof}[Proof of Theorem \ref{thm rev hold}]
	Let us commence with the proof of \eqref{eq rev hold}. Let $\{ \tilde{\phi}_{s,n} \}_{n \in \zd}$ be the dual basis to $\{ {\phi}_{s,n} \}_{n \in \zd}$. If $f\in U_s$ then 
	\[ f = \sum_{n \in \zd} a_n \phi_{s,n}, \quad a_n \coloneqq \la f, \tilde{\phi}_{s,n} \ra, \] and by Lemma \ref{lem rescal norm} we have
	\begin{align*}
		\| f \|_{\xopt} & = r^{d/2} \| D_r f \|_{X^{\infty,2}} \\ 
		& = \left\| \sum_{n \in \zd} a_n \phi_{s/r,n}  \right\|_{X^{\infty,2}} \\
		& = \lc \frac{r}{s} \rc^{d/2} \left\| \sum_{n \in \zd} a_n \phi \lc \frac{r}{s} \cdot \, - n \rc \right\|_{X^{\infty,2}} \\ 
		& = \lc \frac{r}{s} \rc^{d/2} \left\| \left\| \sum_{n \in \zd} a_n \phi \lc \frac{r}{s} (x+y) - n  \rc \mathbbm{1}_{B_1}(y)  \right\|_{L^\infty_y} \right\|_{L^2_x} \\
		& = \left\| \left\| \sum_{n \in \zd} a_n \phi (x+y - n) \mathbbm{1}_{B_{r/s}} (y) \right\|_{L^\infty_y} \right\|_{L^2_x} \\ 
		& = \left\| \sum_{n \in \zd} a_n T_n \phi  \right\|_{X^{\infty,2}_{r/s}} \\ 
		& \lesssim \lc 1+ \frac{r}{s} \rc^{d/2} \left\| \sum_{n \in \zd} a_n T_n \phi  \right\|_{X^{\infty,2}}, \numberthis \label{eqal f sum}
	\end{align*} where in the last step we used Lemma \ref{lem rescal norm} and Proposition \ref{prop dil spaces}. 

Assume now \eqref{eq cond rev 1}, namely $\phi \in X^{\infty,1}$. Then the conclusion follows from \eqref{eqal f sum} using the equivalent discrete-type norm in \eqref{eq disc norm} (with $Q=[0,1]^d$):
\begin{align*}
	\| f \|_{\xopt} & \lesssim \lc 1+ \frac{r}{s} \rc^{d/2} \left\| \sum_{n \in \zd} a_n T_n \phi  \right\|_{X^{\infty,2}} \\ 
	 & \lesssim \lc 1+ \frac{r}{s} \rc^{d/2} \left\| \sum_{n \in \zd} |a_n| \| \phi (k+y-n) \mathbbm{1}_Q \|_{L^\infty_y} \right\|_{\ell^2_k} \\
	 & \lesssim \lc 1+ \frac{r}{s} \rc^{d/2} \lc \sum_{k \in \zd}  \| \phi (k+y) \mathbbm{1}_Q \|_{L^\infty_y} \rc \lc \sum_{n \in \zd} |a_n|^2 \rc^{1/2} \\ 
	 & \lesssim \lc 1+ \frac{r}{s} \rc^{d/2}\| \phi \|_{X^{\infty,1}} \| f \|_{L^2},
\end{align*} where we used that $\ell^1 * \ell^2 \hookrightarrow \ell^2$ and  $\|  a_n \|_{\ell^2} \lesssim \| f\|_{L^2}$. 

Let us assume \eqref{eq cond rev 2} instead. By \eqref{eqal f sum}, it is enough to show that
\[ \left\| \sum_{n \in \zd} a_n T_n \phi  \right\|_{X^{\infty,2}} \lesssim  \| f \|_{L^2}. 
\] 
Using the embedding in Proposition \ref{immersioni} we obtain
\begin{align*}
	\left\| \sum_{n \in \zd} a_n T_n \phi  \right\|_{X^{\infty,2}} & \lesssim \left\| \sum_{n \in \zd} a_n T_n \phi  \right\|_{H^\alpha_\otimes} \\ 
	& = \left\| \sum_{n \in \zd} a_n e^{-in\omega} \hat{\phi}(\omega) v^\alpha(\omega) \right\|_{L^2} \\
	&  = \lc \int_{[0,2\pi]^d} \sum_{k \in \zd} | F(\omega) (v^\alpha \hat{\phi})(\omega-2\pi k)|^2 d\omega \rc^{1/2} \\
	& \le \| F \|_{L^2([0,2\pi]^d)}   \lc \esssup_{\omega \in [0,2\pi]} \sum_{k \in \zd} | (v^\alpha \hat{\phi})(\omega-2\pi k)|^2 \rc^{1/2} \\
	& \lesssim \| f \|_{L^2}, 
\end{align*} where we set $F(\omega) \coloneqq \sum_{n \in \zd} a_n e^{-in\omega}$ (which is a $2\pi$-periodic, square integrable on $[0,2\pi]$, function), and then used \eqref{eq cond rev 2} and 
\[ \| F \|_{L^2([0,2\pi]^d)}^2 \asymp \sum_{n \in \zd} |a_n|^2 \asymp \| f \|_{L^2}^2. \]

The proof of \eqref{eq grad rev hold} goes along the same lines after differentiation in the representation $f = \sum_{n \in \zd} a_n \phi_{s,n}$; the details are left to the interested reader. 
\end{proof}

\begin{remark}\label{rem cont Us}
	\begin{enumerate}[(i)]
		\item It is easy to realize that if $\phi$ satisfies \eqref{eq cond rev 2} then  $\phi\in H^\alpha_\otimes(\rd)$ (it is enough to integrate both sides of \eqref{eq cond rev 2} on $[0,2\pi]$); as a result, if $\alpha>1/2$ then $\phi$ is continuous by Proposition \ref{immersioni}. 		
		\item If $\phi\in L^2(\rd)$ satisfies Assumption \ref{asu rev 2} then $\phi$ has first order partial derivatives locally in $L^\infty$, hence $\phi$ is locally Lipschitz, therefore continuous.  
		\item If $\phi\in L^2(\rd)$ satisfies the assumption $A$ and $B$ and is continuous, then $U_s \hookrightarrow C(\rd)$ since the truncated sums $\sum_{|n|\le N} a_n T_{sn}\phi$ are continuous and \eqref{eq rev hold} shows that convergence in $L^2$ implies convergence in $\xopt \hookrightarrow L^\infty(\rd)$ for functions in $U_s$.
		\item If $s \ll r$ then the occurrence of the factor $r/s$ in \eqref{eq rev hold} can be heuristically explained by the presence of highly oscillating functions in $U_s$, which are not stable under deformations of ``size'' $r$. 
	\end{enumerate}
\end{remark}
	
We are ready to provide deformation sensitivity bounds for functions in $U_s$. 

\begin{theorem}\label{thm bound Us} 
	Let $\phi \in L^2(\rd)\cap C(\rd)$ satisfy Assumptions \ref{asu riesz} and \ref{asu rev 1}. There exists a constant $C>0$ such that, for every $\tau \in L^\infty(\rd;\rd)$ and $s>0$
	\begin{equation}\label{eq bound Us}
		\| F_\tau f \|_{L^2} \le C (1+\|\tau\|_{L^\infty}/s)^{d/2}\| f\|_{L^2}, \quad f \in U_s.
	\end{equation}
\end{theorem}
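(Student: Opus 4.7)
The plan is to chain together the two main building blocks established earlier in the paper: the pointwise deformation bound from Proposition \ref{maint xopt} and the reverse Hölder-type inequality from Theorem \ref{thm rev hold}(i).

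First, I would observe that since $\phi \in L^2(\rd) \cap C(\rd)$ and Assumption \ref{asu riesz} holds, Remark \ref{rem cont Us}(iii) gives the embedding $U_s \hookrightarrow C(\rd)$. Hence any $f \in U_s$ admits a continuous representative, so the hypothesis of Proposition \ref{maint xopt} is fulfilled for every $f \in U_s$, provided we also know $f \in \xopt$ with $r = \|\tau\|_{L^\infty}$ — and this membership is precisely what Theorem \ref{thm rev hold}(i) will yield.

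Second, setting $r \coloneqq \|\tau\|_{L^\infty}$, I would apply Proposition \ref{maint xopt} to obtain
\[
\| F_\tau f \|_{L^2} \le \| f \|_{\xopt}.
\]
Then Theorem \ref{thm rev hold}(i), which is available since $\phi$ satisfies Assumption \ref{asu rev 1}, gives
\[
\| f \|_{\xopt} \le C (1 + r/s)^{d/2} \| f \|_{L^2}, \quad f \in U_s,
\]
with a constant $C>0$ independent of $r$ and $s$. Composing these two inequalities produces exactly \eqref{eq bound Us}.

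Essentially there is no obstacle here: the theorem is a direct corollary of the two deeper results already proved in Sections \ref{sec xopt} and \ref{sec mult}. The only subtlety worth mentioning explicitly is the continuity of $f$, which one needs in order to invoke Proposition \ref{maint xopt} (as the remark after that proposition underlines); this is handled cleanly by the assumption $\phi \in C(\rd)$ together with Remark \ref{rem cont Us}(iii). All the real work — the uncertainty-principle-type embedding $U_s \hookrightarrow \xopt$ with the correct scale-dependent constant $(1+r/s)^{d/2}$, and the identification of $\xopt$ as the optimal space for arbitrary bounded deformations — has already been carried out.
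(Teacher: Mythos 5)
Your proposal is correct and follows exactly the same route as the paper's proof: the continuity of functions in $U_s$ (via Remark \ref{rem cont Us}) justifies applying Proposition \ref{maint xopt}, and concatenation with Theorem \ref{thm rev hold}(i) for $r=\|\tau\|_{L^\infty}$ gives the claim. No gaps.
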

\begin{proof} The desired estimate follows by a straightforward concatenation of Proposition \ref{maint xopt}, since the assumptions on $\phi$ imply that $U_s \hookrightarrow C(\rd)$ (cf.\ Remark \ref{rem cont Us}), and Theorem \ref{thm rev hold} with $r=\| \tau \|_{L^\infty}$.  
\end{proof}

\begin{theorem}\label{thm def bound Us}
	Let $\phi \in L^2(\rd) $ be such that Assumptions \ref{asu riesz}, \ref{asu rev 1} and \ref{asu rev 2} are satisfied. There exists a constant $C>0$ such that
	\begin{equation}\label{eq def bound Us}
		\| F_\tau f - f \|_{L^2}  \, \le \begin{cases}
			 C (\|\tau\|_{L^\infty}/s) \|f \|_{L^2} & (\| \tau \|_{L^\infty}/s \le 1) \\
		C (\|\tau\|_{L^\infty}/s)^{d/2} \|f \|_{L^2} & (\| \tau \|_{L^\infty}/s \ge 1)
		\end{cases},
	\end{equation}
	for every $\tau \in L^\infty(\rd;\rd)$, $s>0$ and $f \in U_s$.
\end{theorem}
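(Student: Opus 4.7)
The plan is to split into the two regimes $\|\tau\|_{L^\infty}/s\leq 1$ and $\|\tau\|_{L^\infty}/s\geq 1$ and concatenate the results already proven. In both regimes we set $r = \|\tau\|_{L^\infty}$ for brevity. Note first that Assumption \ref{asu rev 2} together with Remark \ref{rem cont Us} guarantees that $\phi$ (and hence every element of $U_s$) is continuous, so all the previous continuity hypotheses are met.

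For the regime $r/s \le 1$ I would apply Proposition \ref{maint grad xopt}, which yields
\[
\| F_\tau f - f \|_{L^2} \le C\, r\, \| \nabla f \|_{\xopt},
\]
and then feed in the gradient reverse Hölder bound \eqref{eq grad rev hold} from Theorem \ref{thm rev hold}(ii), which is applicable precisely because Assumption \ref{asu rev 2} holds. This gives
\[
\| F_\tau f - f \|_{L^2} \le C\, r \cdot s^{-1}(1+r/s)^{d/2} \|f\|_{L^2} \le C' (r/s)\|f\|_{L^2},
\]
where in the last step I use that $(1+r/s)^{d/2}\le 2^{d/2}$ when $r/s\le 1$. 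This handles the first case.

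For the regime $r/s \ge 1$ I would dispense with any gradient estimate and just apply the triangle inequality together with Theorem \ref{thm bound Us}:
\[
\| F_\tau f - f \|_{L^2} \le \| F_\tau f \|_{L^2} + \| f \|_{L^2} \le C(1+r/s)^{d/2}\|f\|_{L^2} + \|f\|_{L^2}.
\]
Since $r/s \ge 1$ implies $(1+r/s)^{d/2}\le 2^{d/2}(r/s)^{d/2}$ and $1 \le (r/s)^{d/2}$, we conclude
\[
\| F_\tau f - f \|_{L^2} \le C'(r/s)^{d/2}\|f\|_{L^2},
\]
as desired.

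There is no single hard step here once the machinery is set up: the whole proof is a short concatenation of Proposition \ref{maint grad xopt} with Theorem \ref{thm rev hold}(ii) in the small-deformation regime, and the triangle inequality with Theorem \ref{thm bound Us} in the large-deformation regime. The real work has already been absorbed into the reverse Hölder inequality of Theorem \ref{thm rev hold}, which is why Assumptions \ref{asu rev 1} and \ref{asu rev 2} are both needed (the first underlies the $L^2$-bound on $F_\tau f$, the second underlies the gradient bound). The only mild care needed is the bookkeeping of the factor $(1+r/s)^{d/2}$ in the two regimes, which trivializes to a constant when $r/s\le 1$ and to $(r/s)^{d/2}$ (up to a constant) when $r/s\ge 1$; this is what produces the piecewise form of \eqref{eq def bound Us}.
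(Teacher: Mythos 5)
Your proposal is correct and follows essentially the same route as the paper's own proof: Proposition \ref{maint grad xopt} combined with the gradient bound \eqref{eq grad rev hold} of Theorem \ref{thm rev hold} in the regime $\|\tau\|_{L^\infty}/s\le 1$, and the triangle inequality together with Theorem \ref{thm bound Us} in the regime $\|\tau\|_{L^\infty}/s\ge 1$. Your explicit remark that Assumption \ref{asu rev 2} guarantees the continuity needed to invoke Theorem \ref{thm bound Us} is a point the paper leaves implicit (via Remark \ref{rem cont Us}), but the argument is the same.
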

\begin{proof}
	Let us consider first the case $\|\tau \|_{L^\infty}/s \le 1$. Combining Proposition \ref{maint grad xopt} with Theorem \ref{thm rev hold} with $r=\|\tau\|_{L^\infty}$ we infer, for $f\in U_s$,
	\begin{align*}
		\| F_\tau f -f \|_{L^2} & \lesssim \|\tau \|_{L^\infty} \| \nabla f \|_{\xopt} \\ 
		& \lesssim (\| \tau \|_{L^\infty}/s) (1+\|\tau\|_{L^\infty}/s)^{d/2} \| f \|_{L^2} \\
		& \lesssim (\| \tau \|_{L^\infty}/s) \| f \|_{L^2},
	\end{align*} that is the claim. 

The case $\|\tau \|_{L^\infty}/s \ge 1$ can be approached via the triangle inequality, that is \linebreak $\| F_\tau f - f \|_{L^2} \le \|F_\tau f \|_{L^2} + \| f \|_{L^2}$, and Theorem \ref{thm bound Us}. 
\end{proof}

\begin{remark} \label{rem examples bis}
More generally, the same result of Theorem \ref{thm def bound Us} holds if $f$ is replaced on the left-hand side by $P_{U_s} f$ for $f \in L^2(\rd)$, cf.\ Remark \ref{rem proj}. Moreover, taking into account the examples in Example \ref{rem examples} we see that Theorem \ref{thm def bound Us} applies when $U_s$ are approximation spaces of polynomial splines of degree $n\geq 1$, as well of band-limited functions --- which can be regarded as splines of infinite order. 
\end{remark}

We conclude this section by extending the above stability bounds to signal classes with minimal regularity. 
In addition to the assumptions of Theorem \ref{thm def bound Us}, we suppose that
 $V_j\coloneqq U_{2^j}$, $j\in\mathbb{Z}$, define a multiresolution approximation of $L^2(\rd)$, so that $V_{j+1}\subset V_j$. Let $W_{j+1}$ be the orthogonal complement of $V_{j+1}$ in $V_j$ and $P_{W_j}$ be the corresponding orthogonal projection; for $s\in\mathbb{R}$, the corresponding homogeneous Besov norm \cite[Section 9.2.3]{mallbook} is given by
 \begin{equation}\label{eq besov}
 \|f\|_{\dot{B}^s_{2,1}}=\sum_{j\in\mathbb{Z}}2^{-js}\|P_{W_j}f\|_{L^2}.
 \end{equation}
 
\begin{theorem}\label{main teo 0} Under the same assumptions of Theorem \ref{thm def bound Us}, suppose in addition that
 $V_j\coloneqq U_{2^j}$, $j\in\mathbb{Z}$, define a multiresolution approximation of $L^2(\rd)$.
 
 There exists $C>0$ such that for every $\tau\in L^\infty(\rd;\rd)$ and $f\in L^2(\rd)$ with $\|f\|_{\dot{B}^{d/2}_{2,1}}<\infty$,
\begin{equation}\label{eq stima dim 1 besov 0}
\|F_\tau f - f\|_{L^2} \leq C (\|\tau\|_{L^\infty}\|f\|_{\dot{B}^{1}_{2,1}} +\|\tau\|_{L^\infty}^{d/2}\|f\|_{\dot{B}^{d/2}_{2,1}}), \qquad d\geq 2,
\end{equation}
and
\begin{equation}\label{eq stima dim 1 besov}
\|F_\tau f - f\|_{L^2} \leq C  \|\tau\|_{L^\infty}^{1/2}\|f\|_{\dot{B}^{1/2}_{2,1}}, \qquad d=1.
\end{equation}
\begin{proof}
We consider the decomposition 
\[
f=\sum_{\|\tau\|_{L^\infty}\leq 2^j}P_{W_j} f+ \sum_{2^j<\|\tau\|_{L^\infty}}P_{W_j}f
\]
and apply \eqref{eq def bound Us} to each term, hence we obtain
\begin{equation}\label{eq stima intermedia} 
\|F_\tau f-f\|_{L^2}\lesssim \|\tau\|_{L^\infty}\sum_{\|\tau\|_{L^\infty}\leq 2^j} 2^{-j}\|P_{W_j} f\|_{L^2}+\|\tau\|_{L^\infty}^{d/2} \sum_{2^j<\|\tau\|_{L^\infty}}2^{-jd/2}\|P_{W_j}f\|_{L^2},
\end{equation}
which implies the desired result if $d\geq 2$. 

For $d=1$ it is sufficient to continue the estimate in \eqref{eq stima intermedia} using
\[
\sum_{\|\tau\|_{L^\infty}\leq 2^j} 2^{-j}\|P_{W_j} f\|_{L^2}\leq \sum_{\|\tau\|_{L^\infty}\leq 2^j} 2^{-j/2}2^{-j/2}\|P_{W_j} f\|_{L^2}\leq \|\tau\|_{L^\infty}^{-1/2}\|f\|_{\dot{B}^{1/2}_{2,1}}.
\]
\end{proof}
\begin{remark}
From the very definition \eqref{eq besov} of the Besov norm, it follows that if $d\geq 2$ and $f\in L^2(\rd)$ with $\|f\|_{\dot{B}^{d/2}_{2,1}}<\infty$ then $\|f\|_{\dot{B}^{1}_{2,1}}<\infty$. 

Also, note that even in dimension 1 we have $\|F_\tau f - f\|_{L^2}=O(\|\tau\|_{L^\infty})$ as $\|\tau\|_{L^\infty}\to 0$ for every fixed $f\in U_s$ and every $s>0$, as a consequence of Theorem \ref{thm def bound Us}. However this asymptotic estimate is not uniform in the ball $\|f\|_{L^2}+\|f\|_{\dot{B}^{1/2}_{2,1}}\leq 1$, and the factor $\|\tau\|^{1/2}_{L^\infty}$ in \eqref{eq stima dim 1 besov} is instead optimal when looking for uniform estimates; see the examples in Section \ref{sec sharp pwr} below. In dimension $d\geq 2$ it follows easily from \eqref{eq stima dim 1 besov 0} that $\|F_\tau f - f\|_{L^2}=O(\|\tau\|_{L^\infty})$ as $\|\tau\|_{L^\infty}\to 0$ uniformly for $f$ in the ball $\|f\|_{L^2}+\|f\|_{\dot{B}^{d/2}_{2,1}}\leq 1$.
\end{remark}

\end{theorem}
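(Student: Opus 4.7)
The plan is to exploit the decoupling principle together with the multiresolution decomposition of $f$. First, since $\Phi$ is Lipschitz we immediately reduce to estimating $\|F_\tau f - f\|_{L^2}$, because
\[
\adm{\Phi(F_\tau f)-\Phi(f)} \le \mathrm{Lip}(\Phi)\, \|F_\tau f - f\|_{L^2}.
\]
Using the multiresolution decomposition $f = \sum_{j \in \mathbb{Z}} P_{W_j} f$ in $L^2(\rd)$ and the linearity of $F_\tau$, I would split at the critical scale $2^{j_\ast} \asymp \|\tau\|_{L^\infty}$:
\[
F_\tau f - f = \sum_{2^j \ge \|\tau\|_{L^\infty}} (F_\tau - I) P_{W_j} f \,+\, \sum_{2^j < \|\tau\|_{L^\infty}} (F_\tau - I) P_{W_j} f.
\]

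Next I would apply Theorem \ref{thm def bound Us} to each piece, noticing that $P_{W_j} f \in W_j \subset V_{j-1} = U_{2^{j-1}}$, so we can take $s \asymp 2^j$. For the first sum one has $\|\tau\|_{L^\infty}/s \le 1$, and the first case of \eqref{eq def bound Us} yields
\[
\|(F_\tau - I) P_{W_j} f\|_{L^2} \lesssim \|\tau\|_{L^\infty}\, 2^{-j}\, \|P_{W_j}f\|_{L^2}.
\]
For the second sum one has $\|\tau\|_{L^\infty}/s \ge 1$, and the second case gives
\[
\|(F_\tau - I) P_{W_j} f\|_{L^2} \lesssim \|\tau\|_{L^\infty}^{d/2}\, 2^{-jd/2}\, \|P_{W_j}f\|_{L^2}.
\]
Summing in $j$ and recognizing these as partial sums of the Besov norms \eqref{eq besov} with regularity indices $1$ and $d/2$ respectively, one obtains
\[
\|F_\tau f - f\|_{L^2} \lesssim \|\tau\|_{L^\infty} \|f\|_{\dot{B}^1_{2,1}} + \|\tau\|_{L^\infty}^{d/2} \|f\|_{\dot{B}^{d/2}_{2,1}},
\]
which already proves \eqref{eq stima dim 1 besov 0} for $d \ge 2$.

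The main subtlety is the $d=1$ case, where the first sum is controlled by $\|f\|_{\dot{B}^1_{2,1}}$, a stronger norm than the desired $\|f\|_{\dot{B}^{1/2}_{2,1}}$; so one cannot simply read off the answer from the $\dot B^1_{2,1}$ estimate. The trick is to redistribute the weight $2^{-j}$ using the restriction $2^j \ge \|\tau\|_{L^\infty}$, writing $2^{-j} = 2^{-j/2} \cdot 2^{-j/2} \le \|\tau\|_{L^\infty}^{-1/2}\, 2^{-j/2}$. This converts the prefactor $\|\tau\|_{L^\infty}$ into $\|\tau\|_{L^\infty}^{1/2}$ and leaves a sum of the form $\sum_j 2^{-j/2}\|P_{W_j}f\|_{L^2} \le \|f\|_{\dot B^{1/2}_{2,1}}$. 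The second sum is already in the right form. Combining the two gives \eqref{eq stima dim 1 besov}. The only delicate point is thus this reweighting step, which is a clean dyadic interpolation argument and presents no real obstacle once the decomposition strategy is in place.
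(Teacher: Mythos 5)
Your proposal is correct and follows essentially the same route as the paper's proof: the same dyadic splitting of $f=\sum_j P_{W_j}f$ at the critical scale $2^j\asymp\|\tau\|_{L^\infty}$, the same application of Theorem \ref{thm def bound Us} to each block (using $W_j\subset V_{j-1}=U_{2^{j-1}}$), and the same reweighting $2^{-j}\le\|\tau\|_{L^\infty}^{-1/2}2^{-j/2}$ to obtain the $\dot{B}^{1/2}_{2,1}$ bound when $d=1$. No gaps.
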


\section{Frequency-modulated deformations}\label{sec freq mod def}

In this section we extend some results proved so far to the class of time-frequency deformation mappings $\Ftw$ associated with distortion functions $\tau\in L^\infty(\rd; \rd)$, $\omega\in L^\infty(\rd;\mathbb{R})$ by setting
\[ \Ftw f(x) \coloneqq e^{i\omega(x)}f(x-\tau(x)),
\]
where $f\colon \rd \to \bC$. In case of trivially null distortions we write $\Fow$ and $\Fto$ with obvious meaning. 

While most of the results above can be stated and proved with minor updates for general deformations $\Ftw$, we prefer to offer here a different perspective that allows one to reduce to the results for $F_\tau$ in a straightforward way. Indeed, note that $\Ftw = \Fow\Fto$ and $\Fto$ coincides with the deformation $F_\tau$ considered in the previous sections. Moreover, for every $f \in L^2(\rd)$ we have that $\| \Ftw f \|_{L^2} = \|\Fto f \|_{L^2}$ for arbitrary measurable $\omega$, and
\[ \| \Ftw f - f \|_{L^2} \le \| \Ftw f - \Fto f \|_{L^2} + \| \Fto f - f \|_{L^2}. \] The second addend is already covered, while for the first one we have 
\[ \| \Ftw f - \Fto f \|_{L^2} \le \| e^{i\omega}-1 \|_{L^\infty} \|\Fto f\|_{L^2} \le \| \omega \|_{L^\infty} \| \Fto f \|_{L^2}. \] 
As a result, the bounds in Propositions \ref{maint xopt} and \ref{maint grad xopt} generalize as follow. 
\begin{theorem}\label{thm gen xopt} We have
	\begin{equation}\label{eq gen xopt bound}
		 \|\Ftw f \|_{L^2} \le \|f \|_{\xopt}, \quad r=\| \tau\|_{L^\infty},
	\end{equation}
	 for every $f \in \xopt \cap C(\rd)$ and $\tau \in L^\infty(\rd;\rd)$, $\omega \in L^\infty(\rd;\mathbb{R})$. \par	 
	 Moreover, there exists $C>0$ such that 
\begin{equation}\label{eq maint grad gen}
	\| \Ftw f - f\|_{L^2} \le C(\| \tau \|_{L^\infty} \| \nabla f \|_{\xopt} + \|\omega \|_{L^\infty}\|f\|_{\xopt}), \quad r=\| \tau\|_{L^\infty},
\end{equation}
for every $\tau \in L^\infty(\rd;\rd)$,  $\omega \in L^\infty(\rd;\mathbb{R})$ and $f \in \xopt$ with $\| \nabla f \|_{\xopt}<\infty$. 
\end{theorem}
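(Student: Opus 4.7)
The proof is essentially a reduction to the already-proved Propositions \ref{maint xopt} and \ref{maint grad xopt} via the factorization $\Ftw = \Fow \Fto$, as foreshadowed in the paragraph preceding the statement. I will proceed in three short steps.

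First, I would observe the pointwise identity $|\Ftw f(x)| = |e^{i\omega(x)}|\cdot|f(x-\tau(x))| = |F_\tau f(x)|$, valid almost everywhere for $\omega$ real-valued. Taking $L^2$ norms gives $\|\Ftw f\|_{L^2} = \|F_\tau f\|_{L^2}$, and hence \eqref{eq gen xopt bound} follows at once from \eqref{eq xopt bound} in Proposition \ref{maint xopt}, recalling that $f \in \xopt \cap C(\rd)$ is exactly the hypothesis under which \eqref{eq xopt bound} applies.

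For \eqref{eq maint grad gen}, the plan is to insert $\Fto f$ as an intermediate term and apply the triangle inequality:
\[
\|\Ftw f - f\|_{L^2} \le \|\Ftw f - \Fto f\|_{L^2} + \|\Fto f - f\|_{L^2}.
\]
The second summand is controlled directly by Proposition \ref{maint grad xopt}, yielding $\|\Fto f - f\|_{L^2} \le C\|\tau\|_{L^\infty}\|\nabla f\|_{\xopt}$; note that the hypothesis $\|\nabla f\|_{\xopt}<\infty$ ensures that $f$ admits a continuous representative, hence $\Fto f$ is well-defined in $L^2$. For the first summand I would factor out the modulation pointwise,
\[
|\Ftw f(x) - \Fto f(x)| = |e^{i\omega(x)}-1|\,|f(x-\tau(x))| \le \|\omega\|_{L^\infty}\,|F_\tau f(x)|,
\]
where the elementary inequality $|e^{it}-1|\le |t|$ for $t\in\bR$ is used. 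Taking $L^2$ norms and applying \eqref{eq gen xopt bound} just established (equivalently, \eqref{eq xopt bound}) gives $\|\Ftw f - \Fto f\|_{L^2} \le \|\omega\|_{L^\infty}\|f\|_{\xopt}$.

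Combining the two bounds yields the claim. There is no real obstacle here: the only point worth checking is the continuity hypothesis, which in the second estimate comes for free from $\|\nabla f\|_{\xopt}<\infty$ via the local Lipschitz argument recorded in the discussion after Proposition \ref{maint grad xopt}. The whole argument is thus a transparent perturbative reduction to the $\omega\equiv 0$ case, as announced in the prelude to the theorem.
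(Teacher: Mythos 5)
Your proposal is correct and follows essentially the same route as the paper: the identity $\|\Ftw f\|_{L^2}=\|\Fto f\|_{L^2}$ reduces \eqref{eq gen xopt bound} to Proposition \ref{maint xopt}, and the triangle inequality through $\Fto f$ together with $\|e^{i\omega}-1\|_{L^\infty}\le\|\omega\|_{L^\infty}$ and Proposition \ref{maint grad xopt} gives \eqref{eq maint grad gen}. Your explicit remark that $\|\nabla f\|_{\xopt}<\infty$ supplies the continuity needed to invoke \eqref{eq xopt bound} is a welcome precision that the paper leaves implicit.
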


With the same arguments of the proofs of Theorems \ref{thm def bound Us}, using the bounds in Theorem \ref{thm gen xopt} whenever appropriate, we obtain the following generalization. 
\begin{theorem} 
	Let $\phi \in L^2(\rd)$ be such that Assumptions \ref{asu riesz}, \ref{asu rev 1} and \ref{asu rev 2} in Section \ref{sec mult} hold. There exists a constant $C>0$ such that
\begin{equation}
	\| \Ftw f - f \|_{L^2}  \, \le \begin{cases}
		C (\|\tau\|_{L^\infty}/s+\|\omega\|_{L^\infty}) \|f \|_{L^2} & (\| \tau \|_{L^\infty}/s \le 1) \\
		C (\|\tau\|_{L^\infty}/s)^{d/2} \|f \|_{L^2} & (\| \tau \|_{L^\infty}/s \ge 1)
	\end{cases},
\end{equation}
for every $s>0$, $f \in U_s$ and $\tau \in L^\infty(\rd;\rd)$,  $\omega \in L^\infty(\rd;\mathbb{R})$. 
\end{theorem}

We remark that for band-limited functions $U_s = \mathrm{PW}_R$ with $s=\pi/R$ and in the relevant case where $R\|\tau \|_{L^\infty} \le 1$ we recover the same bounds proved in \cite{wiat paper} without extra regularity conditions on $\tau$ or $\omega$. Similarly, one could generalize the estimates in Besov spaces of the previous section. 

\section{Sharpness of the estimates}\label{sec sharp pwr} 
We now study the problem of the sharpness of some estimates proved so far, focusing in particular on the case of band-limited functions. 

For $R>0$ consider the space of band-limited functions 
\[ \mathrm{PW}_R \coloneqq \{ f \in L^2(\rd) : \supp \hat{f} \subset [-R,R]^d \}. \]
We already commented in Example \ref{rem examples} that such a space of low-frequency functions can be equivalently designed as a multiresolution space; precisely, we have $\mathrm{PW}_R = U_s$ with $s=\pi / R$ after choosing the normalized low-pass sinc filter $\phi=\phi_0 \otimes \cdots \otimes \phi_0$ ($d$ times), with $\phi_0(t) = \pi^{-1/2}\sin t/t$, $t \in \bR$, which satisfies Assumptions \ref{asu riesz}, \ref{asu rev 1}, \ref{asu rev 2}. 

Theorems \ref{thm bound Us} and \ref{thm def bound Us} above thus cover the case of band-limited approximations. Precisely, \eqref{eq bound Us} now reads 
\begin{equation}\label{eq bound pwr} \| F_\tau f \|_{L^2} \le C (1+R\|\tau\|_{L^\infty})^{d/2}\| f\|_{L^2}, \quad f \in \mathrm{PW}_R, \end{equation} while \eqref{eq def bound Us} becomes
\begin{equation}\label{eq def bound pwr} \| F_\tau f - f \|_{L^2}  \, \le \begin{cases}
		C R\|\tau\|_{L^\infty} \|f \|_{L^2} & (R\| \tau \|_{L^\infty}\le 1) \\
		C (R\|\tau\|_{L^\infty})^{d/2} \|f \|_{L^2} & (R\| \tau \|_{L^\infty} \ge 1)
	\end{cases}, \quad  f \in \mathrm{PW}_R. \end{equation}
We claim that the exponent $d/2$ appearing in the previous estimates is optimal. For what concerns \eqref{eq bound pwr}, it suffices to consider $f_R \in \mathrm{PW}_R$ given by $f_R=R^{d/2} D_R \phi$, so that $\| f_R\|_{L^2} = 1$ and $\widehat{f_R} = (\pi/R)^{d/2} \mathbbm{1}_{[-R,R]^d}$. Now, for $K>0$ set
\[ \tau(x) = \begin{cases}
	x & (|x|\le K) \\ 0 & (|x|>K)
\end{cases}, \] so that $\| \tau \|_{L^\infty} = K$. Then, for $|x|\le K$ we have
\[ F_\tau f_R (x) = f_R(0)= (R/\pi)^{d/2}, \] and thus 
\[
\| F_\tau f_R \|_{L^2} \gtrsim (R \| \tau \|_{L^\infty})^{d/2}.
\]
By the triangle inequality we also deduce
\begin{equation}\label{stima dal basso}
	\| F_\tau f_R - f_R \|_{L^2}  \gtrsim (R \| \tau \|_{L^\infty})^{d/2},\quad  R \| \tau \|_{L^\infty}\gg 1,
\end{equation}which shows the sharpness of the exponent $d/2$ in \eqref{eq def bound pwr} as well.

Concerning the sharpness of the estimate \eqref{eq def bound pwr} in the regime $R\| \tau \|_{L^\infty}\ll 1$ we see that if $f=f_R$ as above and $\tau(x)=(c,0,\ldots,0)\in\rd$ (constant), for $|c|R$ small enough we have 
\[
\|F_\tau f_R-f_R\|^2_{L^2}=\Big(\frac{1}{2R}\Big)^d\int_{[-R,R]^d} |e^{-ic\omega_1}-1|^2\, d\omega\gtrsim R^{-d}\int_{[-R,R]^d}(c\omega_1)^2\, d\omega\gtrsim (cR)^2.
\]

\section{Random deformations}\label{sec random}
We now model the deformation $\tau(x)$ as a measurable random field, i.e.\ $\tau(x)=\tau(x,\omega)$ depends on an additional variable\footnote{In this section we do not consider frequency-modulated deformations, nor we use the notation $\omega$ for the frequency, hence there is not risk of confusion with the notation of previous sections.} $\omega \in \mathcal{U}$, where the sample space $\mathcal{U}$ is equipped with a probability measure $\mathbb{P}$, and the function $\tau(x,\omega)$ is jointly measurable (see for instance \cite[Chapter 3]{gihman} for further details). 

It is easy to realize that the results of the previous sections hold for almost every realization of $\tau(x)$ if, e.g., $\|\tau\|_{L^\infty}<\infty$, which must be intended hereinafter as the essential supremum jointly in $x,\omega$. However, it turns out that some results hold, in fact, in a \textit{maximal} sense\footnote{Actually, we could equivalently reformulate the main estimates of the previous sections as results for the maximal operators $\sup_{|y|\leq r}|f(x-y)|$ and $\sup_{|y|\leq r}|f(x-y)-f(x)|$. However, the above presentation in terms of their linearized versions $F_\tau$ and $F_\tau-I$ seems closer to the spirit of the intended  applications.}. Precisely, an inspection of the proof of the formula \eqref{eq xopt bound} shows that we have
\begin{equation}\label{eq xopt bound max}
		 \| \| F_\tau f \|_{L^\infty(\mathcal{U})} \|_{L^2} \le \|f \|_{\xopt}, \quad r=\| \tau\|_{L^\infty},
	\end{equation}
and similarly \eqref{eq maint grad xopt} becomes
\begin{equation}\label{eq maint grad xopt max}
	\|\|F_\tau f-f \|_{L^\infty(\mathcal{U})}\|_{L^2}\le C \|\tau\|_{L^\infty} \|\nabla f \|_{\xopt}, \quad r=\|\tau\|_{L^\infty}.
\end{equation} 
As a consequence, under the assumptions of Theorem \ref{thm def bound Us} we have, for $f\in U_s$,
	\begin{equation}\label{eq def bound Us max}
		\|\|F_\tau f-f\|_{L^\infty(\mathcal{U})}\|_{L^2}  \, \le \begin{cases}
			 C (\|\tau\|_{L^\infty}/s) \|f \|_{L^2} & (\| \tau \|_{L^\infty}/s \le 1) \\
		C (\|\tau\|_{L^\infty}/s)^{d/2} \|f \|_{L^2} & (\| \tau \|_{L^\infty}/s \ge 1)
		\end{cases},
	\end{equation}
	while arguing as in the proof of Theorem \ref{main teo 0} we get 
	\begin{equation}\label{eq 4 star max}
\|\|F_\tau f-f\|_{L^\infty(\mathcal{U})}\|_{L^2} \leq C(\|\tau\|_{L^\infty}\|f\|_{\dot{B}^{1}_{2,1}} +\|\tau\|_{L^\infty}^{d/2}\|f\|_{\dot{B}^{d/2}_{2,1}}), \qquad d\geq 2,
\end{equation}
and
\begin{equation}\label{eq 5 star max}
\|\|F_\tau f-f\|_{L^\infty(\mathcal{U})}\|_{L^2}\leq C \|\tau\|_{L^\infty}^{1/2}\|f\|_{\dot{B}^{1/2}_{2,1}}, \quad d=1.
\end{equation}
We are now ready to state our result concerning the stability in mean under random deformations. 
\begin{theorem}\label{teo random} Under the assumption \ref{asu riesz}, \ref{asu rev 1} and \ref{asu rev 2} in Section \ref{sec mult}, there exists a constant $C>0$ such that, for every $s>0$ and $f\in U_s$, 
\begin{equation}\label{eq stima random 1}
\mathbb{E} \|F_\tau f - f\|_{L^2}^2\leq C\mathbb{E}[(|\tau|/s)^2+(|\tau|/s)^d]\|f\|^2_{L^2},\quad d\geq 2,
\end{equation}
and 
\begin{equation}\label{eq stima random 2}
\mathbb{E} \|F_\tau f - f\|_{L^2}^2\leq C \mathbb{E}[ \min\{(|\tau|/s)^2,(|\tau|/s)^d\}]|f\|^2_{L^2}, \quad d=1,
\end{equation} for every measurable random function $\tau$ such that the random variables $|\tau(x)|$, $x \in \rd$, are identically distributed and the above moments are finite. 

Moreover, if the spaces $U_{2^j}$, $j\in\mathbb{Z}$, define a multiresolution approximation of $L^2(\rd)$, for the same deformations $\tau(x)$ and every $f\in L^2(\rd)$ with $\|f\|_{\dot{B}^{d/2}_{2,1}}  <\infty$ we have
\begin{equation}\label{eq 6 star max}
\mathbb{E} \|F_\tau f - f\|_{L^2}^2\leq C (\mathbb{E}[|\tau|^2] \|f\|^2_{\dot{B}^{1}_{2,1}} +\mathbb{E}[|\tau|^d]\|f\|^2_{\dot{B}^{d/2}_{2,1}}) \qquad d\geq 2
\end{equation}
and
\begin{equation}\label{eq 7 star max}
\mathbb{E} \|F_\tau f - f\|_{L^2}^2 \leq C\mathbb{E}[|\tau|]\|f\|^2_{\dot{B}^{1/2}_{2,1}} \quad d=1.
\end{equation}
\end{theorem}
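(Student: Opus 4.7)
The Lipschitz bound on $\Phi$ reduces all four inequalities to their counterparts for $\mathbb{E}\|F_\tau f-f\|_{L^2}^2$, losing only a factor $\Lip^2$. My plan is therefore to first establish the fixed-scale statements \eqref{eq stima random 1} and \eqref{eq stima random 2}, and then derive the Besov statements \eqref{eq 6 star max} and \eqref{eq 7 star max} from them by a multiresolution argument.

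For the fixed-scale estimates I would use Fubini to rewrite $\mathbb{E}\|F_\tau f-f\|^2_{L^2}=\int_{\rd}\mathbb{E}|f(x-\tau(x))-f(x)|^2\,dx$ and split the integrand pointwise in $x$ into the complementary events $\{|\tau(x)|\leq s\}$ and $\{|\tau(x)|>s\}$. On the small-deformation event, the Poincar\'e-type bound \eqref{eq stima lipsc} (already used for Proposition \ref{maint grad xopt}) gives $|f(x-\tau(x))-f(x)|^2\mathbbm{1}_{\{|\tau(x)|\leq s\}}\leq C|\tau(x)|^2\|\nabla f\|^2_{L^\infty(B(x,s))}$; taking expectation, integrating in $x$, and invoking the reverse H\"older bound \eqref{eq grad rev hold} with $r=s$ together with the identical distribution of $|\tau(x)|$ produces a contribution $\lesssim s^{-2}\mathbb{E}[|\tau|^2\mathbbm{1}_{\{|\tau|\leq s\}}]\|f\|_{L^2}^2$. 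On the large-deformation event, the crude bound $|f(x-\tau(x))-f(x)|^2\leq 4\sup_{|z|\leq|\tau(x)|}|f(x-z)|^2$ combined with a dyadic slicing $\{|\tau(x)|\in[2^{k-1}s,2^k s)\}$, $k\geq 1$, deterministically dominates the supremum on each slice by $\sup_{|z|\leq 2^k s}|f(x-z)|^2$, whose $L^2_x$-integral is $\|f\|^2_{X^{\infty,2}_{2^k s}}\lesssim 2^{kd}\|f\|_{L^2}^2$ by \eqref{eq rev hold}; the corresponding slice probability satisfies the Markov-type inequality $\mathbb{P}(|\tau|\in[2^{k-1}s,2^k s))\leq 2^{-(k-1)p}\mathbb{E}[(|\tau|/s)^p\mathbbm{1}_{\{|\tau|\in[2^{k-1}s,2^k s)\}}]$ for any $p\geq 1$. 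Choosing $p=d$ (or $p=1$ when $d=1$) makes the series telescope to $\mathbb{E}[(|\tau|/s)^d\mathbbm{1}_{\{|\tau|>s\}}]$ (resp.\ with exponent one). Summing small- and large-event contributions yields \eqref{eq stima random 1} when $d\geq 2$; for $d=1$ the exponents $2$ on $\{|\tau|\leq s\}$ and $1$ on $\{|\tau|>s\}$ rearrange exactly into $\min\{(|\tau|/s)^2,|\tau|/s\}$, giving \eqref{eq stima random 2}.

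For the Besov bounds I would decompose $f=\sum_{j\in\bZ}P_{W_j}f$, observing that each $P_{W_j}f$ lies in an approximation space at scale $\sim 2^j$, and apply Minkowski's inequality in $L^2(\cU,\mathbb{P})$:
\[
\lc\mathbb{E}\|F_\tau f-f\|^2_{L^2}\rc^{1/2}\leq \sum_{j\in\bZ}\lc\mathbb{E}\|F_\tau P_{W_j}f-P_{W_j}f\|^2_{L^2}\rc^{1/2}.
\]
Inserting \eqref{eq stima random 1} (respectively \eqref{eq stima random 2}) into each summand with $s=2^j$ and using $\sqrt{a+b}\leq\sqrt a+\sqrt b$ splits the right-hand side into two series of the form $\sum_j 2^{-j}\|P_{W_j}f\|_{L^2}$ and $\sum_j 2^{-jd/2}\|P_{W_j}f\|_{L^2}$, which by definition \eqref{eq besov} coincide with $\|f\|_{\dot{B}^1_{2,1}}$ and $\|f\|_{\dot{B}^{d/2}_{2,1}}$ respectively. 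Squaring produces \eqref{eq 6 star max}, and the one-dimensional \eqref{eq 7 star max} follows identically from the $\min$-structure in \eqref{eq stima random 2}.

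The main technical obstacle will be the large-deformation regime $\{|\tau(x)|>s\}$: since $\tau$ is not assumed bounded, one cannot directly invoke the deterministic maximal estimate \eqref{eq def bound Us max}. The dyadic slicing paired with the reverse H\"older bound \eqref{eq rev hold} is the device that matches the geometric $2^{kd/2}$-growth of $\|f\|_{X^{\infty,2}_{2^k s}}$ for $f\in U_s$ against the random moment $\mathbb{E}[(|\tau|/s)^d]$, and it is the essential point where the identically distributed hypothesis is used.
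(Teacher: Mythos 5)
Your argument is correct and rests on the same mechanism as the paper's own proof: a dyadic slicing of the values of $|\tau(x)|$, the observation that the deterministic sensitivity bounds are really maximal (i.e.\ uniform over all deformations of a given size) estimates, and the identically-distributed hypothesis to factor the slice probabilities out of the $x$-integral. The only differences are organizational — you treat the regime $\{|\tau(x)|\le s\}$ pointwise via the Poincar\'e bound \eqref{eq stima lipsc} together with \eqref{eq grad rev hold} instead of slicing it dyadically, and you derive the Besov estimates by Minkowski's inequality in $L^2(\mathcal{U}\times\rd)$ applied to the multiresolution decomposition followed by the fixed-scale random bounds, whereas the paper first upgrades the Besov bounds of Theorem \ref{main teo 0} to their maximal forms \eqref{eq 4 star max}--\eqref{eq 5 star max} and then repeats the slicing argument.
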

For the sake of brevity, we wrote $\mathbb{E}[|\tau|^2]$ in place of $\mathbb{E}[|\tau(x)|^2]$, and similarly for the other moments, since the variables $|\tau(x)|$, $x\in \rd$, are assumed to be identically distributed. However, observe that the field $\tau(x)$ is not assumed to be bounded.
\begin{proof}[Proof of Theorem \ref{teo random}]
Let us prove \eqref{eq stima random 1} and \eqref{eq stima random 2} first. Let us set
\[ \tau_j(x) \coloneqq \begin{cases} \tau(x) & (2^{j-1}<|\tau(x)|\leq 2^j) \\ 0 & (\text{otherwise}) \end{cases}, \quad j \in \bZ. \] Then we can write
\begin{align*}
\|F_\tau f-f\|^2_{L^2}&=\sum_{j\in\mathbb{Z}} \int_{\rd} |F_{\tau_j}f(x)-f(x)|^2 \mathbbm{1}_{\{2^{j-1}<|\tau|\leq 2^j\}}(x)\, dx\\
&\leq \sum_{j\in\mathbb{Z}} \int_{\rd} \mathbbm{1}_{\{2^{j-1}<|\tau|\leq 2^j\}}(x) \| F_{\tau_j}f(x)-f(x) \|^2_{L^\infty(\mathcal{U})} \, dx.
\end{align*}
Taking the expectation and setting $p_j=\mathbb{P}(\{2^{j-1}<|\tau(x)|\leq 2^j\})$ (note that $p_j$ is independent of $x$) we get
\[
\mathbb{E} \|F_\tau f-f\|^2_{L^2}\leq \sum_{j\in\mathbb{Z}} p_j \| \| F_{\tau_j}f-f\|_{L^\infty(\mathcal{U})} \|_{L^2}^2.
\]
We use the estimate \eqref{eq def bound Us max} to bound each term and we obtain 
\[
\mathbb{E} \|F_\tau f-f\|^2_{L^2}\lesssim \lc \sum_{2^j\leq s} p_j (2^j/s)^2 +  \sum_{s<2^j} p_j (2^j/s)^d \rc \|f\|^2_{L^2},\quad f\in U_s.
\]
We now observe that, for every $x\in\rd$,
\[
\sum_{2^j\leq s} p_j (2^j/s)^2 = \sum_{2^j\leq s} \mathbb{E}[(2^j/s)^2\mathbbm{1}_{\{2^{j-1}<|\tau(x)|\leq 2^j\}}] \lesssim \mathbb{E}[ (|\tau(x)|/s)^2\mathbbm{1}_{\{|\tau(x)|/s\leq 1\}}]
\]
and similarly
\begin{align*}
 \sum_{s<2^j} p_j (2^j/s)^d &\lesssim \mathbb{E}[(|\tau(x)|/s)^d\mathbbm{1}_{\{|\tau(x)|/s> 1/2\}}]\\
 &\lesssim \mathbb{E}[ (|\tau(x)|/s)^2\mathbbm{1}_{\{1/2<|\tau(x)|/s\leq 1\}}+(|\tau(x)|/s)^d\mathbbm{1}_{\{|\tau(x)|/s> 1\}}].
\end{align*}
Hence we have proved the estimate 
\[
\mathbb{E} \|F_\tau f-f\|^2_{L^2}\lesssim \mathbb{E}[ (|\tau(x)|/s)^2\mathbbm{1}_{\{|\tau(x)|/s\leq 1\}}+(|\tau(x)|/s)^d\mathbbm{1}_{\{|\tau(x)|/s> 1\}}]\|f\|^2_{L^2},
\]
which gives \eqref{eq stima random 1} and \eqref{eq stima random 2}.

Similar arguments lead to the proof of \eqref{eq 6 star max} and \eqref{eq 7 star max}, now using \eqref{eq 4 star max} and \eqref{eq 5 star max}.
\end{proof}


\section*{Acknowledgements}

The authors wish to express their gratitude to Giovanni S.\ Alberti, Enrico Bibbona and Matteo Santacesaria for fruitful conversations on the topics of the manuscript, as well as for valuable comments on preliminary drafts.

The present research has been partially supported by the MIUR grant Dipartimenti di Eccellenza 2018-2022, CUP: E11G18000350001, DISMA, Politecnico di Torino. 

S. Ivan Trapasso was member of the Machine Learning Genoa (MaLGa) Center, Università di Genova, when this study was performed. This material is based upon work supported by the Air Force Office of Scientific Research under award number FA8655-20-1-7027. 

F. Nicola is a fellow of the Accademia delle Scienze di Torino, and a member of the Societ\`a Italiana di Scienze e Tecnologie Quantistiche (SISTEQ).

The authors are members of the Gruppo Nazionale per l'Analisi Matematica, la Probabilità e le loro Applicazioni (GNAMPA) of the Istituto Nazionale di Alta Matematica (INdAM).

\end{document}